\documentclass[11pt,leqno]{article}

\usepackage{amssymb,amsmath,amsthm,mysects,url}
 \usepackage{graphicx,epsfig}
\topmargin -.5in
\textheight 9in
\textwidth 6.5in
\oddsidemargin 0.0in
\evensidemargin 0.0in

\newcommand{\bb}[1]{\mathbb{#1}}
\newcommand{\cl}[1]{\mathcal{#1}}

\newcommand{\ovl}{\overline}

\theoremstyle{remark}
\newtheorem{rem}{Remark}
\newtheorem*{rk}{Remark}

\theoremstyle{plain}
\newtheorem{pro}[rem]{Proposition}
\newtheorem{thm}[rem]{Theorem}
\newtheorem{lem}[rem]{Lemma}

\newtheorem{cor}[rem]{Corollary}

\theoremstyle{definition}
\newtheorem*{defn}{Definition}

\begin{document}

\title{On the Dixmier problem\\
(Seminar report after Monod-Ozawa, JFA 2010)}

\author{by\\
Gilles  Pisier\\
Texas A\&M University\\
College Station, TX 77843, U. S. A.\\
and\\
Universit\'e Paris VI\\
Equipe d'Analyse, Case 186, 75252\\
Paris Cedex 05, France}

\maketitle

\begin{abstract}
This seminar report contains a detailed account of the proof of the main results in Monod and Ozawa's recent JFA paper on the Dixmier unitarizability problem. The proof is exactly identical to their proof, but our more pedestrian presentation is hopefully more accessible to nonexperts. This text is not intended for publication
(but it might end up as part of an updated version of \cite{P}).
\end{abstract}

\section{Introduction}\label{sec0}

The   Dixmier problem story starts with the following
result proved in the 
particular case $G={\bb Z}$ by Sz.-Nagy (1947).

\begin{thm}[Day, Dixmier 1950]\label{thm0.1}
Let $G$ be a locally
compact group. If 
$G$ is amenable, then 
$G$ is unitarizable, meaning that every uniformly bounded (u.b.\ in short)
representation 
$\pi\colon \ G\to B(H)$ ($H$ Hilbert) is unitarizable. \end{thm}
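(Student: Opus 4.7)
The strategy is the classical averaging argument: use amenability to build a $G$-invariant equivalent inner product on $H$, and then convert this equivalent inner product into a similarity that unitarizes $\pi$.

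First, fix a uniformly bounded representation $\pi\colon G\to B(H)$ with $C:=\sup_{g\in G}\|\pi(g)\|<\infty$. For each pair $\xi,\eta\in H$ I would consider the coefficient function
\[
\phi_{\xi,\eta}(g)=\langle \pi(g)\xi,\pi(g)\eta\rangle,
\]
which is bounded by $C^{2}\|\xi\|\,\|\eta\|$ and (for a strongly continuous $\pi$) continuous on $G$, so it lies in $C_{b}(G)$ (or $L^{\infty}(G)$ in the measurable setting). By amenability, $G$ carries a left-invariant mean $m$ on $C_{b}(G)$ (respectively on $L^{\infty}(G)$). I would then define a new sesquilinear form
\[
\langle \xi,\eta\rangle_{1}:=m\bigl(\phi_{\xi,\eta}\bigr).
\]

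The key properties to check are: (i) $\langle\cdot,\cdot\rangle_{1}$ is sesquilinear, hermitian and positive (immediate from the properties of a mean applied to a positive function $\phi_{\xi,\xi}(g)=\|\pi(g)\xi\|^{2}\ge 0$); (ii) it is equivalent to the original inner product, since
\[
C^{-2}\|\xi\|^{2}\le \langle\xi,\xi\rangle_{1}\le C^{2}\|\xi\|^{2},
\]
using $\|\xi\|=\|\pi(g)^{-1}\pi(g)\xi\|\le C\|\pi(g)\xi\|$; (iii) it is $G$-invariant, i.e.\ $\langle \pi(h)\xi,\pi(h)\eta\rangle_{1}=\langle\xi,\eta\rangle_{1}$ for every $h\in G$, which follows from the left-invariance of $m$ applied to the function $g\mapsto \phi_{\pi(h)\xi,\pi(h)\eta}(g)=\phi_{\xi,\eta}(gh)$.

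By (ii) there is a positive invertible operator $T\in B(H)$ with $\langle \xi,\eta\rangle_{1}=\langle T\xi,\eta\rangle$, and (iii) translates into the operator identity $\pi(g)^{*}T\pi(g)=T$. Setting $S:=T^{1/2}$, a direct computation shows $(S\pi(g)S^{-1})^{*}(S\pi(g)S^{-1})=I$, so $S\pi(g)S^{-1}$ is an isometry; since $\pi$ is a representation it is invertible, hence unitary. This gives the desired unitarization via similarity by $S$.

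\textbf{Where the difficulty lies.} The conceptual content is concentrated in two places: producing the invariant mean (which is exactly the definition/characterization of amenability we are allowed to invoke) and ensuring that the coefficient functions $\phi_{\xi,\eta}$ belong to a space on which the mean acts. For locally compact groups with strongly continuous $\pi$ this is routine ($\phi_{\xi,\eta}\in C_{b}(G)$), but if one wants to allow merely weakly measurable representations one must take a mean on $L^{\infty}(G)$ and check the required measurability. Once this is handled, the remaining steps are essentially formal.
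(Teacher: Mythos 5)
The paper does not actually prove Theorem~\ref{thm0.1}; it is quoted as a classical result of Day and Dixmier (going back to Sz.-Nagy for $\bb Z$), so there is no in-text proof to compare against. Your argument is the standard averaging proof and it is essentially correct: the new form is bounded, positive definite and equivalent to the original inner product with constants $C^{-2}$ and $C^{2}$, the Riesz/Lax--Milgram representation gives a positive invertible $T$ with $\pi(g)^{*}T\pi(g)=T$, and $S=T^{1/2}$ unitarizes $\pi$. As a bonus, since $C^{-2}I\le T\le C^{2}I$ you get $\|S\|\,\|S^{-1}\|\le C^{2}=|\pi|^{2}$, which is exactly the quantitative refinement recorded in the paper's Remark after the theorem.

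Two small points to tidy up. First, the function $g\mapsto\phi_{\xi,\eta}(gh)$ is a \emph{right} translate of $\phi_{\xi,\eta}$, so invariance of $\langle\cdot,\cdot\rangle_{1}$ under $\pi(h)$ requires a right-invariant mean, not a left-invariant one as written; this is harmless because left and right amenability are equivalent (or one can use a two-sided invariant mean), but the statement should be corrected. Second, for a general locally compact group one should say on which function space the mean acts: the coefficient functions $\phi_{\xi,\eta}$ of a strongly continuous uniformly bounded representation are bounded and (right) uniformly continuous, and an amenable locally compact group admits an invariant mean on that space, so the construction goes through; you flag this issue yourself, and it is indeed the only place where the locally compact (as opposed to discrete) setting requires care.
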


 We say that  
$\pi\colon \ G\to B(H)$ ($H$ Hilbert)
 is unitarizable if there exists ${S}\colon\ H\to
H$ invertible such that $t\to {S}^{-1} \pi(t){S}$
is a  unitary representation.  We will mostly
restrict to discrete groups, but otherwise all 
representations $\pi\colon \ G\to B(H)$ are
implicitly assumed continuous on $G$  with
respect to the strong operator topology on
$B(H)$.

In his 1950 paper, Dixmier \cite{Di} asked whether the converse holds:\\
\centerline{\it Are unitarizable groups amenable?}
 
\begin{rem}\label{rem}
It is immediate that unitarizable passes to quotient groups. In the discrete case, it is easy to show
(using induced representations) that it also  passes to subgroups. However, in sharp contrast with amenability, it is unclear whether  the product of two unitarizable groups is unitarizable (but it is
so if one of the two groups is amenable).
\end{rem}
\begin{rk} A more precise version of the preceding Theorem is as follows:
Assume $G$ amenable.
Let
$|\pi| = \sup \{\|\pi(t)\|_{B(H)}\mid t\in G\},$
then, if $|\pi| < \infty$, there exists ${S}\colon\ H\to H$
invertible with 
$\|{S}\| \ \|{S}^{-1}\| \le |\pi|^2$ such that $t\to {S}^{-1}
\pi(t){S}$ is a 
unitary representation. Moreover, $S$ can be found in the von Neumann
algebra generated by the range of $\pi$.\\
 If we stregthen ``unitarizable" by incorporating these extra properties of $S$, then
 we do obtain a characterization of amenability. Namely, we proved (see \cite{P2})
 that if $G$ is amenable and $S$ can always be found such that 
 $\|{S}\| \ \|{S}^{-1}\| \le |\pi|^2$ (or even $\|{S}\| \ \|{S}^{-1}\| \le K|\pi|^\alpha$ for some constant $K$ and some $\alpha<3$) then $G$ is amenable. This holds for
 general locally compact groups. In a different direction, we proved in \cite{P3} that if 
  a {\it discrete} group $G$ is unitarizable and $S$ can always be found in the von Neumann
algebra generated by the range of $\pi$. then $G$ is amenable.
 \end{rk}
 
Recently, Monod and Ozawa proved that if a discrete $G$ is unitarizable and if moreover  the wreath product
$A\wr G$ is unitarizable for some {\it infinite} Abelian group $A$, then $G$ is amenable.
This gives one more characterization of amenability by a strengthened form of unitarizability
(since conversely:  $G$ amenable implies  $A\wr G$ amenable
 and hence unitarizable for any such $A$, and actually for any amenable $A$).

The wreath product $A\wr G$  of two groups is defined as the semi-direct product of $A^{(G)}$ and $G$ when $G$ acts on $A^{(G)}$ by 
   translation say on the left side: More precisely,
the elements of $A^{(G)}$ are functions $a\colon\ G \to A$ that are equal to the unit except in finitely many places, and the action of an element $g\in G$ is defined by $\forall t\in G,  (g\cdot a)(t)=a(g^{-1}t).$\\
The elements of  $A\wr G$ can be viewed as pairs
$(a,g)$ with $a\in A^{(G)}, g\in G$, equipped with a product defined by
$$(a,g) (a',g')=(a  (g\cdot a') , gg').$$
Then $A^{(G)}$ and $G$ can be identified with subgroups of
$A\wr G$,  and $A^{(G)}$ appears as  a normal subgroup
such that for all $g\in G$ and $a\in A^{(G)}$ we have
$g a g^{-1}= g\cdot a$.

In 1955 Ehrenpreis and Mautner gave the first example
(namely $G=SL_2({\bb R})$)  of a non-unitarizable group.
From this follows that any group contaning a copy of ${\bb F}_2$
(free group with 2 generators) is not unitarizable (see \cite{P} for detailed references).

Until recently, all examples of non-unitarizable groups were groups contaning a copy of ${\bb F}_2$ (see \S \ref{Dixsec2} below for  background). This prompted the question raised in \cite{P2} whether there are  non-unitarizable groups among groups without free subgroups, in particular among  the Burnside groups. Monod and Ozawa answered this positively in \cite{MO}, as a consequence of their   result stated above
on  $A\wr G$, see \S \ref{Dixsec4}. Shortly before that, Monod and Epstein (using groups constructed specially for them
by Denis Osin) exhibited in \cite{EM} the first examples of non-unitarizable groups without free subgroups.

\section{Some algebra}\label{Dixsec1}

\noindent 

Let $G$ be a discrete group. Let $\sigma\colon \ G\to B(H)$ be a unitary representation. We define the action of $G$ on $B(H)$ by
\begin{equation}
 g\cdot T = \sigma(g)T \sigma(g)^{-1}.\tag*{$\forall g\in G~~\forall T\in B(H)$}
\end{equation}
Let $H^1_b(G,B(H)) =  C_1/B_1$ where
\begin{align*}
 C_1 &= \{f\in \ell_\infty(G;B(H))\mid f(st) = s\cdot f(t) + f(s)\quad \forall s,t\in G\},\\
B_1 &= \{f\in C_1\mid \exists T\in B(H) \ni f(g)= g\cdot T-T\quad \forall g\in S\}.
\end{align*}
An element $f$ in $C_1$ is called a bounded [1]-cocycle; when $f\in B_1$ the [1]-cocycle is called trivial (it is also called a ``coboundary'').
This terminology is motivated by the following generalization valid for any integer $n\ge 0$:

We denote by ${\cl C}_n$ the space of all bounded functions
$f\colon\ G^{n+1}\to B(H)$ such that $\forall g,g_j\in G$
\[
 f(gg_0,\ldots, gg_n) = g\cdot f(g_0,\ldots, g_n)
\]
and such that $\partial f=0$ where $\partial\colon \ {\cl C}_n\to {\cl C}_{n+1}$ is the linear map defined by:
\begin{equation}
 \partial f(g_0,\ldots, g_{n+1}) = \sum\nolimits^{n+1}_0(-1)^j f(g_0,\ldots, \hat g_j,\ldots, g_{n+1}).\tag*{$\forall g,g_j\in G$}
\end{equation}
Then we define
\[
 H^n_b(G,B(H)) = {\cl C}_n/{\cl B}_n
\]
where ${\cl B}_n \subset {\cl C}_n$ is defined by
\[
 {\cl B}_n= \{\partial F\mid F\in {\cl C}_{n-1}\}.
\]
In other terms, if $\ell_n=\ell_\infty(G^{n+1}; B(H))$, we have $\partial\partial=0$ on $\ell_n$ and
\begin{align*}
 {\cl C}_n &= \ker(\partial\colon \ \ell_n\to\ell_{n+1})\\
{\cl B}_n &\equiv \partial\ell_{n-1}\subset {\cl C}_n.
\end{align*}

For convenience we reserve the term [1]-cocycle for the elements of the space
 $C_1$. We will call cocycles the elements of $ {\cl C}_n$.

\begin{rem}\label{Dixrem1-1}
We have
\begin{equation}\label{Dixeq1-1}
 C_1/B_1\simeq {\cl C}_1/{\cl B}_1.
\end{equation}
Indeed, given a [1]-cocycle $f\in C_1$ we can define $F\in {\cl C}_1$ by setting
\[
 F(g_0,g_1) = g_0\cdot f(g^{-1}_0g_1).
\]
Indeed $F(gg_0,gg_1) = gF(g_0,g_1)$ is obvious and
\begin{align*}
 \partial F(g_0,g_1,g_2) &=F(g_1,g_2)-F(g_0,g_2)+F(g_0,g_1) \\
 &= g_1f(g^{-1}_1g_2) - g_0f(g^{-1}_0g_2) + g_0f(g^{-1}_0g_1)\\
&= g_0\cdot (s\cdot f(t) - f(st) + f(s))
\end{align*}
where $s = g^{-1}_0g_1$ and $t=g^{-1}_1g_2$. Thus $\partial F = 0$ iff $f$ is a [1]-cocycle. Moreover $f(g) = g\cdot T-T$ implies $F(g_0,g_1) = g_1\cdot T-g_0\cdot T$ and hence $F = \partial\varphi$ where $\varphi \in {\cl C}_0$ is defined by $\varphi(g) = g\cdot T$. 
Conversely, given $F\in {\cl C}_1$
the formula $f(t)=F(1,t)$ defines $f\in C_1$ (also note $F(1,1)=0$ and $F(1,t)=-F(t,1)$). Since the converse implications clearly hold, this proves \eqref{Dixeq1-1}. 
\end{rem}

\begin{rem}\label{Dixrem1-2}
A map $D\colon \ G\to B(H)$ is called a $\sigma$-derivation if
\begin{equation}
 D(st) = \sigma(s)D(t) + D(s)\sigma(t).\tag*{$\forall s,t\in G$}
\end{equation}
It is called inner if there is $T$ in $B(H)$ such that
\begin{equation}
 D(t) = \sigma(t)T - T\sigma(t).\tag*{$\forall t\in G$}
\end{equation}
The mapping
\[
f\to D_f
\]
defined by $D_f(t) = f(t)\sigma(t)$ is a 1-1 correspondence between bounded [1]-cocycles and bounded $\sigma$-derivations. Moreover, $f$ is trivial iff $D_f$ is inner. We leave the easy verification as an exercise for the reader.
\end{rem}

\begin{pro}\label{Dixpro1-1}

\item[\rm (i)] If $\Gamma={\bb F}_N$ (free group with $N$ generators) for $2\le N\le \infty$, then
with respect to the (left, say) regular representation of $\Gamma$ on $\ell_2\Gamma$ we have:
\[
 H^1_b(\Gamma, B(\ell_2\Gamma))\ne 0.
\]\\
\item[\rm (ii)] Any discrete group $G$ such that 
\[
H^1_b(G,B(H))\ne 0
\]
for some unitary representation $\pi\colon\ G\to B(H)$ is not unitarizable.

\end{pro}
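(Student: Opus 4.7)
The plan splits naturally by part: for (ii) I will manufacture a uniformly bounded non-unitarizable representation directly from the given cocycle, while for (i) I must exhibit an explicit nontrivial bounded cocycle on $\mathbb{F}_N$, which is where the free/tree structure will play the decisive role.

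For (ii) I will switch to the $\sigma$-derivation picture of Remark \ref{Dixrem1-2}: write $D := D_f$, a bounded $\sigma$-derivation that is not inner (since $f$ is not trivial). Define $\pi\colon G \to B(H \oplus H)$ by
\[
 \pi(g) := \begin{pmatrix} \sigma(g) & D(g) \\ 0 & \sigma(g) \end{pmatrix}.
\]
A direct block computation using the identity $D(st) = \sigma(s) D(t) + D(s) \sigma(t)$ shows $\pi(st) = \pi(s)\pi(t)$, and $\pi$ is uniformly bounded because $\sigma$ is unitary and $D$ is bounded. To see $\pi$ is not unitarizable I will use an invariant-complement argument: if $\pi = S^{-1} U S$ with $U$ unitary and $S$ invertible, then since $H \oplus 0$ is $\pi$-invariant, $S(H \oplus 0)$ is $U$-invariant, so its orthogonal complement is also $U$-invariant, and pulling back by $S^{-1}$ yields a $\pi$-invariant complement $W$ of $H \oplus 0$. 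Any such $W$ is necessarily the graph of a bounded operator $T\colon H \to H$, i.e.\ $W = \{(Tx, x) : x \in H\}$. Spelling out $\pi(g) W \subseteq W$ forces $D(g) = T \sigma(g) - \sigma(g) T$, exhibiting $D$ as inner (with parameter $-T$) and contradicting the hypothesis.

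For (i) the task is to produce a bounded, non-inner $\sigma$-derivation $D\colon \mathbb{F}_N \to B(\ell_2 \mathbb{F}_N)$ relative to the left regular representation. The freeness of $\mathbb{F}_N$ is a structural advantage: a $\sigma$-derivation is determined by its values $D(g_1), \ldots, D(g_N)$ on the generators with no consistency condition to satisfy, and the derivation identity extends it unambiguously to any reduced word $g_{i_1}^{\varepsilon_1} \cdots g_{i_k}^{\varepsilon_k}$ as a telescoping sum of $k$ terms. My plan is to choose the $D(g_i)$ from operators naturally attached to the Cayley tree --- for instance, built from orthogonal projections onto the ``half-subtree'' cut off at the edge labeled $g_i$, or obtained by differentiating at $z = 1$ a Pytlik--Szwarc-type analytic family $(\pi_z)$ of uniformly bounded representations through $\pi_1 = \sigma$.

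The main obstacle in (i) will be twofold. First, proving \emph{uniform boundedness} of $D$: the naive triangle-inequality estimate on the telescoping formula grows linearly in word length, so genuine cancellation along geodesic paths in the tree must be extracted (this is precisely the mechanism underlying the Pytlik--Szwarc estimates). Second, proving that $D$ is \emph{not} inner: I plan to argue by contradiction, analyzing the asymptotic behaviour of $D(w_n)$ (tested against, say, $\delta_e$) as $w_n$ runs along a geodesic ray to infinity in $\mathbb{F}_N$, and showing that no single $T \in B(\ell_2 \mathbb{F}_N)$ can reproduce this behaviour via $\sigma(g) T - T \sigma(g)$ while keeping $T \delta_e$ in $\ell_2 \mathbb{F}_N$. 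The non-innerness step is the genuinely group-theoretic heart of the proposition; everything else is a formal consequence of the derivation identity and the block construction used in (ii).
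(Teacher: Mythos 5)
Your proof of (ii) is correct and is precisely the paper's construction: pass to the non-inner bounded $\sigma$-derivation $D=D_f$ via Remark~\ref{Dixrem1-2} and form the $2\times 2$ upper-triangular representation $\pi$. The paper simply cites \cite[p.~80]{P} for the equivalence ``$\pi$ unitarizable iff $D$ inner''; your invariant-complement argument (a $\pi$-invariant complement of $H\oplus 0$ is the graph of a bounded $T$, and invariance forces $D(g)=T\sigma(g)-\sigma(g)T$) is the standard proof of the one direction actually needed, and it is sound.

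Part (i), however, is where the real content of the proposition lies, and your treatment of it is a plan rather than a proof. You correctly observe that freeness lets one prescribe $D$ arbitrarily on the generators and extend it by the derivation identity, and you correctly isolate the two genuine difficulties --- uniform boundedness of the telescoping sums and non-innerness --- but you do not choose the operators $D(g_i)$, you do not carry out the cancellation estimate, and you do not run the contradiction argument for non-innerness. Each of these is a substantive step; the boundedness estimate in particular is not a routine verification, since for a generic choice of the $D(g_i)$ the resulting derivation is genuinely unbounded (e.g.\ $D(g_1)=I$, $D(g_i)=0$ for $i>1$ gives $\|D(g_1^n)\|=n$), so some specific tree-geometric cancellation must actually be exhibited. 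To be fair, the paper itself does not prove (i) either: it invokes \cite[p.~33-34]{P}, where an explicit bounded non-inner derivation on ${\bb F}_N$ is constructed. If your writeup is meant to be self-contained you must either supply such a construction (the Pytlik--Szwarc analytic family you allude to, or the classical derivation built from the tree structure as in \cite{P}) and verify both properties, or, like the paper, explicitly reduce (i) to that reference. As it stands, (i) is asserted together with a heuristic for why it should be provable, which leaves a genuine gap.
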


\begin{proof}
(i) is well known, cf.\ e.g. \cite[p.~33-34]{P}. The proof of (ii) is classical:\ By Remark~\ref{Dixrem1-2}, we may assume that $G$ admits a bounded $\sigma$-derivation $D\colon \ G\to B(H)$ that is not inner.

Let then
\[
 \pi(g) = \begin{pmatrix}
           \sigma(g)&D(g)\\ 0&\sigma(g)
          \end{pmatrix}.
\]
Clearly $\pi$ is a uniformly bounded representation of $G$. It is known (cf.\ e.g.\ \cite[p.~80]{P}) that $\pi$ is unitarizable iff $D$ is inner. Thus $\pi$ (and hence $G$) is not unitarizable.
\end{proof}

\begin{lem}\label{Dixlem3-4}
Let $\Gamma$ be a group with a measure preserving action on a standard probability space $(X,\mu)$. Let $B = B(\ell_2\Gamma)$. We let $\Gamma$ act on $B$ by conjugation, i.e.
\begin{equation}
\gamma\cdot b= \lambda_\Gamma(\gamma)b \lambda_\Gamma(\gamma)^{-1}.\tag*{$\forall\gamma\in\Gamma 
\quad \forall b\in B$}
\end{equation}
We let $\Gamma$ act on $L_\infty(X,\mu;B)$ by setting
\begin{equation} 
(\gamma\cdot F)(x) =\gamma\cdot F(\gamma^{-1}x).\tag*{$\forall \gamma\in\Gamma \quad \forall F\in L_\infty(X,\mu;B)$}
\end{equation}
Then the embedding $B\subset L_\infty(X,\mu;B)$ mapping $B$ to constant functions defines an injective linear map
\begin{equation}\label{Dixeq3-5a}
H^1_b(\Gamma,B) \subset H^1_b(\Gamma,L_\infty(X,\mu;B))
\end{equation}
and similarly for $H^n_b$ any $n>1$.
\end{lem}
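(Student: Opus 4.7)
The plan is to construct a $\Gamma$-equivariant bounded linear left inverse $E\colon L_\infty(X,\mu;B) \to B$ to the constant-function embedding $\iota\colon B\hookrightarrow L_\infty(X,\mu;B)$, and then invoke functoriality of $H^n_b(\Gamma,\cdot)$. Given such an $E$ with $E\circ\iota = \mathrm{id}_B$, the induced maps on cohomology automatically satisfy $E_*\circ\iota_* = \mathrm{id}$ in every degree, which forces $\iota_*$ to be injective; the recipe works uniformly for all $n \geq 1$.

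For the construction I would set $E(F) := \int_X F(x)\, d\mu(x)$. Since $B = B(\ell_2\Gamma)$ is generally non-separable, the cleanest interpretation is the weak-$*$ integral using the identification $B = S_1(\ell_2\Gamma)^*$, namely $\langle E(F), T\rangle := \int_X \langle F(x), T\rangle\, d\mu(x)$ for every trace-class $T$. This is a well-defined contraction, and it equals the identity on constant functions because $\mu(X)=1$, giving $E\circ\iota = \mathrm{id}_B$.

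The one genuinely nontrivial step, and what I expect to be the main obstacle, is verifying $\Gamma$-equivariance of $E$. Direct computation gives
\[
E(\gamma\cdot F) = \int_X \gamma\cdot F(\gamma^{-1}x)\, d\mu(x) = \gamma\cdot \int_X F(\gamma^{-1}x)\, d\mu(x) = \gamma\cdot\int_X F(x)\, d\mu(x) = \gamma\cdot E(F),
\]
where the second equality uses weak-$*$ continuity of the conjugation $b\mapsto \lambda_\Gamma(\gamma)b\lambda_\Gamma(\gamma)^{-1}$ (so it passes through the Pettis integral), and the third uses the measure-preservation hypothesis via the change of variables $x\mapsto \gamma^{-1}x$. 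Both hypotheses of the lemma enter precisely here.

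Finally, post-composition with the $\Gamma$-equivariant bounded maps $\iota$ and $E$ sends the cochain spaces $\ell_\infty(G^{n+1}; B)$ and $\ell_\infty(G^{n+1}; L_\infty(X,\mu;B))$ into one another, commutes with $\partial$ (which involves only the finite alternating sum on group arguments and does not touch the coefficient space), and therefore preserves both the equivariance condition defining $\mathcal{C}_n$ and the coboundary subspace $\mathcal{B}_n$. Hence $\iota$ and $E$ descend to maps on $H^n_b$, and $E_*\circ\iota_* = (E\circ\iota)_* = \mathrm{id}$ delivers injectivity of \eqref{Dixeq3-5a} and of its higher-$n$ analogues. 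Beyond the weak-$*$ integration setup, everything is formal.
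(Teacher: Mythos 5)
Your proposal is correct and is in substance the same argument as the paper's: the paper proves injectivity by taking a coboundary witness $T\in L_\infty(X,\mu;B)$ for $\hat f$ and integrating it to $\widehat T=\int T\,d\mu$, using measure-preservation to see that $\int T(\gamma^{-1}x)\,d\mu(x)$ is independent of $\gamma$ --- which is exactly the equivariance of your expectation $E$. Your packaging of this as a $\Gamma$-equivariant retraction with $E_*\circ\iota_*=\mathrm{id}$ is just a tidier (and slightly more careful, regarding the weak-$*$ integral) formulation of the same step.
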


\begin{proof}
Let $f\colon \ \Gamma^2\to B$ be a bounded cocycle. Let $\hat f(\gamma_0,\gamma_1)\in L_\infty(X,\mu;B)$ be the constant function with value $f(\gamma_0,\gamma_1)$. Then $\hat f$ is clearly a bounded $L_\infty(X,\mu;B)$ valued cocycle that must be trivial if $f$ itself is trivial. Thus $f\to\hat f$ induces a linear mapping as in \eqref{Dixeq3-5a}. To prove that the latter mapping \eqref{Dixeq3-5a} is injective, it suffices to show that $\hat f$ is trivial in $L_\infty(X,\mu;B)$ only if $f$ is trivial in $B$.
To check this assume $\exists T\in L_\infty(X,\mu;B)$ such that 
\[
 \hat f(\gamma_0,\gamma_1) = \gamma_0\cdot T - \gamma_1\cdot T.
\]
This means for a.a.\ $x$
\[
 f(\gamma_0,\gamma_1) = \gamma_0\cdot T(\gamma^{-1}_0x) - \gamma_1T(\gamma^{-1}_1x).
\]
Let $\widehat T = \int T(x) d\mu(x)$. Since the $\Gamma$-action preserves $\mu$,   $\int T(\gamma^{-1}x) d\mu(x)$ does not depend on $\gamma$. Therefore we find
\[\
f(\gamma_0,\gamma_1) = \gamma_0\widehat T - \gamma_1\widehat T
\]
and since $\widehat T\in B$ this proves that $f$ is trivial as a $B$-valued cocycle.
\end{proof}

\begin{rem}\label{rem3-6} We will need the following elementary observation: let $(G_i)_{i\in I}$ be a family of groups.
Let $\cl G$ denote the direct sum of the family $(G_i)_{i\in I}$. An element of $\cl G$  can be viewed
as a family  $(g_i)_{i\in I}$ with $g_i\in G_i$ for all $i\in I$ such that $g_i$ differs from the unit element in
at most finitely many indices $i$. For each $i\in I$ let $\pi_i\colon\ G_i \to B(H)$ be a unitary representation.
Then, if for any $i\not= j$ in $I$ the ranges of $\pi_i$ and $\pi_j$ mutuallly commute, we can define
a unitary representation $\pi \colon\ {\cl G} \to B(H)$ extending each $\pi_i$ by setting simply
$$\pi ((g_i)_{i\in I} )=\prod_{i\in I} \pi_i(g_i).$$
The verification of this assertion is straightforward.
\end{rem}
\section{Free subgroups in non-amenable groups}\label{Dixsec2}

\indent 

It is classical that any group that contains ${\bb F}_2$ as a subgroup is non-amenable. A longstanding open problem (going back to the early days of amenability) was whether conversely any non-amenable group contains a copy of ${\bb F}_2$. This is disproved by A. Olshanskii in \cite{O}. More recently Olshanskii and Sapir \cite{OS} produced finitely generated and finitely presented non-amenable groups without free subgroups. Olshanskii's work used Adyan and Novikov's previous work on the fundamental Burnside problem. The Burnside group $B(m,p)$ is the universal group with $m$ generators such that $g^p=1$ for any element $g$ in the group. The celebrated Burnside problem asks whether there are $m,p$ for which $B(m,p)$ is infinite. Adyan and Novikov proved that $B(2,p)$ is infinite when $p\ge 665$ is odd. From work due to Ivanov and Lysionok (see \cite{Iv}) it is now known that $B(2,p)$ is infinite also when $p$ is a sufficiently large ($p\ge 8000$) \emph{even} number .

Shortly after Olshanskii's result circulated, Adyan \cite{A} proved that the groups $B(2,p)$ for $p$ odd and $p\ge 665$ are not amenable. Since they obviously do not contain ${\bb F}_2$ (actually not even ${\bb F}_1={\bb Z}$!), they provide examples of non-amenable groups without free subgroups. Despite these ``negative'' results, one could still hope to find a characterization of non-amenability by relaxing the notion of embedding of ${\bb F}_2$ into a group.

A recent outstanding result in that direction is the following due to Gaboriau and Lyons.

\begin{thm}[\cite{GL}]\label{Dixthm2-4}
A countable group $G$ is non-amenable iff it ``orbitally contains'' ${\bb F}_2$ in the sense of the following definition.
\end{thm}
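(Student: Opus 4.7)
The plan is to treat the two implications separately, with the easy direction following from hyperfiniteness considerations and the hard direction following the Gaboriau--Lyons strategy via Bernoulli percolation.

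For the easy direction, that orbital containment of ${\bb F}_2$ forces $G$ to be non-amenable, I would argue by contradiction. The orbital copy of ${\bb F}_2$ gives a measured subrelation of the $G$-orbit equivalence relation arising from a free p.m.p.\ action of ${\bb F}_2$. If $G$ were amenable, the Connes--Feldman--Weiss theorem would make every p.m.p.\ $G$-orbit relation hyperfinite; since hyperfiniteness passes to measured subrelations, the ${\bb F}_2$-orbit relation would then be hyperfinite as well. By Zimmer, this would force ${\bb F}_2$ to be amenable, a contradiction.

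For the hard direction, that non-amenable $G$ must orbitally contain ${\bb F}_2$, the plan is to realize the copy of ${\bb F}_2$ inside the orbit equivalence relation of a Bernoulli shift $G\curvearrowright (\{0,1\}^G,\nu^{\otimes G})$, following Gaboriau--Lyons. The steps are: (1) invoke Pak--Smirnova to replace, if needed, the generating set of $G$ by one whose Cayley graph satisfies $p_c<p_u$, so that Bernoulli bond percolation admits parameters with almost surely infinitely many infinite clusters; (2) transfer this percolation picture to the Bernoulli shift of $G$, obtaining a measured $G$-invariant subrelation of the $G$-orbit relation given by ``lying in the same infinite cluster''; (3) using Lyons--Schramm indistinguishability of infinite clusters together with Gaboriau's theory of cost, extract from this cluster relation a \emph{treeable} measured subrelation of cost strictly greater than $1$, graphed for example by the free minimal spanning forest of the infinite clusters; (4) apply Hjorth's theorem that any ergodic treeable p.m.p.\ equivalence relation of cost $>1$ contains the orbit relation of a free p.m.p.\ action of ${\bb F}_2$. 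Since this ${\bb F}_2$-relation sits inside the $G$-orbit relation on the Bernoulli space, this is exactly the claimed orbital copy.

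The main obstacle is step (3): manufacturing, from the mere non-amenability of $G$, a treeable measured subrelation of cost strictly greater than $1$. This is the technical heart of Gaboriau--Lyons, and it is precisely here that one needs the full strength of Bernoulli percolation on non-amenable Cayley graphs --- the non-uniqueness phase, indistinguishability of clusters, and the tree-like structure of spanning forests --- in order to guarantee the strict cost inequality. By contrast, steps (1) and (2) are soft, and step (4) is a black-box application of an independent theorem.
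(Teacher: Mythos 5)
The paper does not actually prove this theorem: it is quoted from Gaboriau--Lyons \cite{GL} and used as a black box, so there is no internal proof to measure your attempt against. Your outline is, however, a faithful summary of the published argument in \cite{GL}: replacing the generating set so that Bernoulli bond percolation on the Cayley graph has $p_c<p_u$ (Pak--Smirnova-Nagnibeda), transferring the percolation to the Bernoulli shift to get the cluster subrelation of the $G$-orbit relation, using Lyons--Schramm indistinguishability and cost/spanning-forest arguments to produce an ergodic treeable subrelation of cost $>1$, and invoking Hjorth's theorem to locate a free p.m.p.\ ${\bb F}_2$-action inside it. Two remarks. First, your ``easy'' direction (orbital containment of ${\bb F}_2$ forces non-amenability, via Connes--Feldman--Weiss, heredity of hyperfiniteness under measured subrelations, and Zimmer) is correct and complete modulo those standard results; note that the paper reaches the same conclusion by two different routes, namely through random embeddings (Proposition~\ref{Dixcor2-5+} together with the implication $\Gamma\overset{\circ}{\subset}G\Rightarrow\Gamma\overset{r}{\subset}G$), and implicitly through Theorem~\ref{Dixthm4-10} (if ${\bb F}_2\overset{\circ}{\subset}G$ then $A\wr G$ is non-unitarizable, hence non-amenable, hence $G$ is non-amenable since $A^{(G)}$ is amenable). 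Second, for the hard direction your step (3) --- manufacturing a treeable subrelation of cost strictly greater than $1$ from non-amenability alone --- is precisely the content of the theorem, and you do not carry it out; you correctly flag it as the technical heart and defer it to the percolation machinery. So the proposal should be read as an accurate roadmap of the cited proof rather than as an independent one; in the context of this paper, which imports the theorem wholesale, that is the appropriate level of detail, but it would be a genuine gap if the theorem were meant to be proved here.
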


\begin{defn}
Let $\Gamma,G$ be countable infinite groups. We say that $\Gamma$ ``orbitally embeds'' in $G$ and we write $\Gamma \overset{\circ}{\subset} G$ if there are free actions 
\[
 \Gamma \curvearrowright (X,\mu)\quad \text{and}\quad G\curvearrowright (X,\mu)
\]
on a standard probability space $(X,\mu)$ (i.e.\ a copy of the Lebesgue interval $([0,1], dt)$) such that for a.e.\ $x$
\[
  \Gamma x\subset Gx.
\]
\end{defn}
It follows obviously that almost any $G$-orbit is a union of $\Gamma$-orbits.

To get the flavor of this notion, a companion concept proposed by N.~Monod is appropriate:\ Let us denote by $[\Gamma,G]$ the set of all injective mappings $h\colon \ \Gamma\to G$ such that $h(1) = 1$. The group $\Gamma$ acts on this set $[\Gamma,G]$ as follows:
\[
 (\gamma\cdot h)(t) = h(t\gamma) h(\gamma)^{-1}\qquad \eqno \forall t,\gamma\in\Gamma.
\]

\begin{defn}\label{def}
We say that $\Gamma$ randomly embeds in $G$ and we write $\Gamma \overset{r}{\subset} G$ if there is a probability measure ${\bb P}$ on $[\Gamma,G]$ that is invariant under this action of $\Gamma$ on $[\Gamma,G]$.
\end{defn}

\begin{rk}
If ${\bb P}$ is supported by a singleton, i.e.\ ${\bb P} = \delta_h$ then ${\bb P}$ is $\Gamma$-invariant iff $h$ is a homomorphism, so in that case we find a classical embedding of $\Gamma$ into $G$. 
\end{rk}
\begin{pro}$\Gamma\overset{\circ}{\subset} G$ implies $\Gamma \overset{r}{\subset} G$.
\end{pro}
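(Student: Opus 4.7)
The plan is to build the random embedding directly from the orbital one by, at each point $x\in X$, extracting the ``cocycle'' that compares the two orbit structures, and then pushing the measure $\mu$ forward to $[\Gamma,G]$.

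First, for a.e.\ $x\in X$, since the $G$-action is free and $\Gamma x\subset Gx$, for each $\gamma\in\Gamma$ there is a unique element of $G$ sending $x$ to $\gamma x$; call it $h_x(\gamma)$. Thus $h_x(\gamma)x=\gamma x$ defines a map $h_x\colon\Gamma\to G$. Clearly $h_x(1)=1$, and $h_x$ is injective because if $h_x(\gamma_1)=h_x(\gamma_2)$ then $\gamma_1 x=\gamma_2 x$, and freeness of the $\Gamma$-action forces $\gamma_1=\gamma_2$. Hence $h_x\in[\Gamma,G]$. A routine argument (e.g.\ writing $\{x:h_x(\gamma)=g\}=\{x:gx=\gamma x\}$) shows that $x\mapsto h_x(\gamma)$ is Borel for every fixed $\gamma$, so $\phi\colon x\mapsto h_x$ is a Borel map from $X$ to $[\Gamma,G]$ (endowed with its standard Borel structure coming from pointwise convergence).

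Next I would verify the key equivariance identity $h_{\gamma x}=\gamma\cdot h_x$ for a.e.\ $x$ and every $\gamma\in\Gamma$. By definition $h_{\gamma x}(t)\cdot (\gamma x)=t(\gamma x)=(t\gamma)x=h_x(t\gamma)\cdot x$; on the other hand $\gamma x=h_x(\gamma)x$, so $h_{\gamma x}(t)h_x(\gamma)\cdot x=h_x(t\gamma)\cdot x$. Using freeness of the $G$-action to cancel $x$, one obtains
\[
h_{\gamma x}(t)=h_x(t\gamma)\,h_x(\gamma)^{-1}=(\gamma\cdot h_x)(t),
\]
which is exactly the $\Gamma$-action on $[\Gamma,G]$ from Definition~\ref{def}.

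Finally, set $\mathbb{P}=\phi_*\mu$, the pushforward of $\mu$ under $\phi$. For any Borel $A\subset[\Gamma,G]$ and any $\gamma\in\Gamma$,
\[
\mathbb{P}(\gamma\cdot A)=\mu\bigl(\{x:h_x\in\gamma\cdot A\}\bigr)=\mu\bigl(\{x:h_{\gamma^{-1}x}\in A\}\bigr)=\mu\bigl(\gamma\cdot\{y:h_y\in A\}\bigr)=\mathbb{P}(A),
\]
where the last equality uses the fact that the $\Gamma$-action on $X$ preserves $\mu$. Thus $\mathbb{P}$ is $\Gamma$-invariant and witnesses $\Gamma\overset{r}{\subset}G$.

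The only slightly technical step is the measurability of $x\mapsto h_x$ (and the fact that the map is well-defined on a conull set); both follow in a direct way from freeness of the $G$-action and the assumption that $\Gamma x\subset Gx$ a.e. Everything else is a short bookkeeping computation with the action in Definition~\ref{def}.
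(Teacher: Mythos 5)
Your proof is correct and follows essentially the same route as the paper: define $h_x$ by $\gamma x = h_x(\gamma)x$ using freeness and the orbit inclusion, establish the equivariance $h_{\gamma x}=\gamma\cdot h_x$, and push $\mu$ forward to get a $\Gamma$-invariant law on $[\Gamma,G]$. The only differences are cosmetic (you make the measurability and the pushforward formulation explicit, where the paper phrases invariance as equality in distribution).
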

\begin{proof}
 Indeed, the fact that the actions of $\Gamma$ and $G$ on $(X,\mu)$ are free means that the mappings $\gamma\to \gamma x$ and $g\to gx$ are injective for a.e.\ $x$. Thus we can define
\[
 h_x\colon \ \Gamma\to G
\]
simply by setting $$\gamma x=h_x(\gamma)x.$$ Since $\Gamma x \subset Gx$ and the above maps are injective, $h_x(\gamma)$ is unambiguously defined for a.e.\ $x$. Clearly $h_x \in [\Gamma,G]$ for a.e. $x$.
 We have $h_x(\gamma\gamma_0)x=\gamma\gamma_0 x=h_{\gamma_0 x} (\gamma)\gamma_0 x=h_{\gamma_0 x} (\gamma)h_{ x} (\gamma_0)x$
 and hence $h_x(\gamma\gamma_0)=h_{\gamma_0 x} (\gamma) h_x(\gamma_0)$
 or equivalently $h_x(\gamma\gamma_0)h_x(\gamma_0)^{-1}=h_{\gamma_0 x} (\gamma) $. Therefore
 since
  $\Gamma$   acts by \emph{measure preserving} transformations : for any fixed $\gamma_0\in\Gamma$
\[
 (h_x(\gamma\gamma_0)h_x(\gamma_0)^{-1})_{\gamma\in\Gamma} \overset{\text{dist}}{=}\, (h_x(\gamma))_{\gamma\in\Gamma}.
\]
Equivalently, if ${\bb P}$ is the law on $[\Gamma,G]$ of the random element $h_x\in [\Gamma,G]$, then ${\bb P}$ is $\Gamma$-invariant so we have $\Gamma\overset{r}{\subset} G$. 
\end{proof}
 \begin{rk}   Note that  if  
 ${\bb P}$ is an invariant probability  on $[\Gamma,G]$ 
 then $\forall s,t\in \Gamma$ 
 we have
 $ h(s t^{-1})= (t^{-1}.h)(s) h(t^{-1})$  and also  (take $s=t$) $ 1=(t^{-1}.h)(t) h(t^{-1})$ and hence $h(t^{-1}))=(t^{-1}.h)(t)^{-1}$.
From this we deduce
 $$h(s t^{-1})= (u.h)(s)  (u.h)(t)^{-1}\quad {\rm with}\quad u=t^{-1}.$$ 
Thus we may write
\begin{equation}\label{Dixeqhom} h(s t^{-1}) \overset{\rm dist}{=} h(s) h(t)^{-1}\end{equation}
 \end{rk} 

\begin{pro}[\cite{M}]\label{Dixcor2-5+}
Assume $\Gamma \overset{r}{\subset} G$. If $G$ is amenable 
then $\Gamma$ is amenable too.
\end{pro}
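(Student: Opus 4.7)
My plan is to transfer amenability from $G$ to $\Gamma$ by pulling back a $G$-invariant mean through the random embedding $\bb P$. Using amenability of $G$, fix a two-sided $G$-invariant mean $\nu$ on $\ell_\infty(G)$.

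The key object is the product space $X:=G\times[\Gamma,G]$ with the left $\Gamma$-action
$$\gamma\cdot(g,h):=\bigl(g\,h(\gamma)^{-1},\ \gamma\cdot h\bigr),$$
which is a genuine group action thanks to the cocycle rule $(\gamma_1\cdot h)(\gamma_2)=h(\gamma_2\gamma_1)h(\gamma_1)^{-1}$. Integrating first over $G$ with $\nu$ and then over $[\Gamma,G]$ with $\bb P$,
$$M(F):=\int_{[\Gamma,G]}\nu_g\bigl[F(g,h)\bigr]\,d\bb P(h)\qquad \bigl(F\in\ell_\infty(X)\bigr),$$
defines a $\Gamma$-invariant state on $\ell_\infty(X)$: for each fixed $h$ the first-coordinate shift $g\mapsto g\,h(\gamma_0^{-1})^{-1}$ is a right translation in $G$ by a fixed element, so it is absorbed by the right invariance of $\nu$, while the residual shift $h\mapsto\gamma_0^{-1}\cdot h$ is absorbed by the $\Gamma$-invariance of $\bb P$.

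To push $M$ down to $\ell_\infty(\Gamma)$ I want a Borel map $\pi\colon X\to\Gamma$ with $\pi(1,h)=1$ and the equivariance rule $\pi(\gamma\cdot x)=\pi(x)\gamma^{-1}$. Then $T\varphi(g,h):=\varphi(\pi(g,h))$ will be a unital positive linear map $\ell_\infty(\Gamma)\to\ell_\infty(X)$, equivariant for the right-translation action $(\gamma_0\cdot\varphi)(\gamma):=\varphi(\gamma\gamma_0)$ on $\ell_\infty(\Gamma)$, and the composition $m_\Gamma:=M\circ T$ will be a right-$\Gamma$-invariant state on $\ell_\infty(\Gamma)$, yielding amenability of $\Gamma$. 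These two conditions automatically force $\pi(g,h)=h^{-1}(g)$ whenever $g\in h(\Gamma)$: setting $\gamma:=h^{-1}(g)$, one checks that $\gamma^{-1}\cdot(1,\gamma\cdot h)=(h(\gamma),h)=(g,h)$, and the equivariance rule applied to $\pi(1,\gamma\cdot h)=1$ gives $\pi(g,h)=\gamma$.

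The main obstacle is to define $\pi$ \emph{measurably} on the $\Gamma$-invariant set $\{(g,h):g\notin h(\Gamma)\}$, where the equivariance leaves me free to pick one basepoint per $\Gamma$-orbit and assign any value there. Since $h(1)=1$ and $h$ is injective, the $\Gamma$-action on $X$ is free, so a Borel cross-section exists by standard descriptive set theory on the standard Borel $\Gamma$-space $X$; once this measurable selection is granted, the equivariance of $T$ and the $\Gamma$-invariance of $M$ combine immediately to produce the desired $\Gamma$-invariant mean.
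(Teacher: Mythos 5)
Your strategy --- pushing a right-invariant mean $\nu$ on $\ell_\infty(G)$ through the space $G\times[\Gamma,G]$ with the action $\gamma\cdot(g,h)=(g\,h(\gamma)^{-1},\gamma\cdot h)$ --- is not the paper's primary proof (which transports finitely supported positive definite functions via $\hat f(\gamma)=\int f(h(\gamma))\,d{\bb P}(h)$ and invokes the weak containment criterion), but it is exactly the geometry of the second proof, Proposition~\ref{proDix2}: same space $\Omega\simeq G\times\Omega_1$, same measure $m_G\times{\bb P}$, same action, with means in place of almost invariant vectors. The skeleton is sound and your computation of the forced values $\pi(g,h)=h^{-1}(g)$ on the invariant set $\{(g,h):g\in h(\Gamma)\}$ is correct. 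The genuine gap is where you dispose of the complementary set: it is \emph{false} that a free Borel action of a countable group on a standard Borel space admits a Borel cross-section. A Borel transversal exists if and only if the orbit equivalence relation is smooth, and free actions are typically non-smooth (the irrational rotation action of ${\bb Z}$ on the circle admits no Borel transversal); freeness buys you nothing here. What saves the argument is the specific structure of this action: the first coordinates of the orbit of $(g,h)$ form the countable set $g\,h(\Gamma)^{-1}\subset G$, and injectivity of $h$ lets you select the unique $\gamma$ realizing the minimal element of that set in a fixed enumeration of $G$. This is precisely the explicit fundamental domain $Y=\bigcup_n A_n'(1)$ built in the proof of Proposition~\ref{proDix2}, and it is that construction, not a general selection theorem, that you must invoke or reproduce.

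A second, subtler gap is in the definition of $M$: for an abstract mean $\nu\in\ell_\infty(G)^*$, the function $h\mapsto\nu_g[F(g,h)]$ need not be ${\bb P}$-measurable, since $\nu$ is not weak-$*$ continuous and $h\mapsto F(\cdot,h)$ is only coordinatewise measurable into the nonseparable space $\ell_\infty(G)$; so the outer integral against ${\bb P}$ is not defined as written, even on the image of $T$. The standard repair is to replace $\nu$ by a net $(\xi_\alpha)$ of finitely supported probability densities on $G$ with $\|\xi_\alpha(\cdot\, s)-\xi_\alpha\|_{\ell_1}\to 0$ for each $s\in G$, set $m_\alpha(\varphi)=\int\sum_g\xi_\alpha(g)\,\varphi(\pi(g,h))\,d{\bb P}(h)$, verify asymptotic $\Gamma$-invariance (the translation amount $h(\gamma_0)$ now depends on $h$, but dominated convergence in $h$ handles this), and take a weak-$*$ cluster point. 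With these two repairs your proof goes through; note that the paper's first proof avoids both issues at once, since $\hat f$ lands in $\ell_1(\Gamma)$ thanks to the injectivity of $h$ and no selection or iterated mean is ever needed.
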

\begin{proof} (Communicated by Stefaan Vaes) 
Let ${\bb P}$ be as in Definition \ref{def}. Let $f$ be a positive definite function on $G$.
We define a function $\hat f$ on $\Gamma$ by
$$\hat f(\gamma)= \int f(h(\gamma) ) d{\bb P}(h).$$
Then $\hat f$ is positive definite on $\Gamma$. Indeed,
let $x$ be any finitely supported function on $\Gamma$. We claim that
$$\sum \hat f(st^{-1}) x(s) \ovl {x(t)} \ge 0.$$
By \eqref{Dixeqhom} we have $\forall s,t\in \Gamma$ 
$$ \int   f(h(st^{-1}))   d{\bb P}(h)=\int f(h(s) h(t)^{-1})d{\bb P}(h)$$
and hence  since $f$ is positive definite we have
$$\sum \hat f(st^{-1}) x(s) \ovl {x(t)}=\int   f(h(st^{-1})) x(s) \ovl {x(t)} d{\bb P}(h)=\int   f(h(s) h(t)^{-1}) x(s) \ovl {x(t)} d{\bb P}(h)\ge 0.$$
Note that if $f$ is 
in ${\ell_1(G)}$, in particular if $f$ is finitely supported
then $\hat f \in \ell_1(\Gamma)$.
Indeed, let $\delta_g$ denote the indicator function of the singleton $\{g\}$.
Then
$\hat {\delta_g}(\gamma)= {\bb P}(\{h\mid  h(\gamma) =g\})$, and hence
$$\sum\nolimits_{\gamma\in \Gamma} \hat {\delta_g}(\gamma)=\int |h^{-1}(g)| d{\bb P}(h)\le 1,$$
and hence for any $f\in \ell_1(G)$ we have
$$\|\hat f\|_{\ell_1(\Gamma)}\le \sum |f(g)| \sum_{\gamma\in \Gamma} \hat {\delta_g}(\gamma)
\le \| f\|_{\ell_1(G)}.$$
Now, assuming $G$ amenable, let $(f_i)$ be a net of {finitely supported}
positive definite functions on $G$ tending  to the constant function 1 pointwise on $G$.
Since the sup norm of $f_i$ is attained at the unit, by dominated convergence,
$\hat f_i(\gamma)\to 1$ for any $\gamma$. Since $\hat f_i\in {\ell_1(\Gamma)}$, this
implies that $G$ is amenable by a well known criterion.
Indeed, this implies that the so-called Fourier algebra $A(G)$ has an approximate unit.
This property can also be easily reduced to the classical Kesten criterion:
since $\hat f_i $ is positive definite and in $A(G)$ it can be written
as $g_i * \tilde g_i$ where $\tilde g(t)=\ovl{g(t^{-1})}$ and $g_i\in \ell_2(\Gamma)$, and hence
from $\hat f_i(\gamma)\to 1$  
we  deduce by a well known argument that $\|g_i - \delta_{\gamma} * g_i\|_{\ell_2(\Gamma)} \to 0$
for any $\gamma$ in $\Gamma$. This equivalently means
that $\lambda_\Gamma$ has almost invariant vectors or weakly contains the trivial
representation. In other words we conclude that $\Gamma$ is amenable.
\end{proof}
\begin{rk} Note that we do not really use the full assumption that $\bb P$ is supported by injective
maps. It suffices that 
$$\sup\nolimits_{g\in G}   \int |h^{-1}(g)| d{\bb P}(h) <\infty.$$
In particular if $||h^{-1}(g)| \le C$ for all $g\in G$ for some $C>1$ we still conclude that
$\Gamma$ is amenable.
\end{rk}

Another proof can be derived from the following Proposition.

Let $\Omega$ denote the set of all maps $h\colon \ \Gamma\to G$ and let $\Omega_1 = \{h\in\Omega\mid h(1)=1\}$. We let $\Gamma$ act on $\Omega$ simply by right translations, i.e.\ we set
\begin{equation}\label{Dix6eq10}
 \forall\omega\in\Omega\quad \forall\gamma\in\Gamma \qquad\quad (\rho(\gamma)\omega)(t) = \omega(t\gamma) \qquad \forall t\in\Gamma.
\end{equation}
We let $G$ act on $\Omega$ by right multiplication, i.e.
\[
 \forall\omega\in\Omega\quad\forall g\in G\qquad\quad (g\cdot\omega)(t) = \omega(t)g^{-1}\qquad \forall t\in\Gamma.
\]
Let $m_G = \sum_{g\in G}\delta_g$ denote the counting (discrete Haar) measure on $G$. Then $\Omega = \cup_{g\in G} g\cdot\Omega_1$ (disjoint union), so that we may identify $\Omega$ with $G\times\Omega_1$. Let ${\bb P}$ be a measure on $\Omega_1$. We equip $\Omega$ with the measure $\mu$ corresponding to $m_G\times {\bb P}$.

\begin{pro}\label{proDix2}
If ${\bb P}$ is a probability supported by $[\Gamma,G]$ and  $\Gamma$-invariant, then $\mu$ is $\rho(\Gamma)$-invariant and the representation $\pi\colon \ \Gamma\to B(L_2(\mu))$ associated to the $\rho(\Gamma)$-action \eqref{Dix6eq10} is a unitary representation weakly equivalent to $\lambda_\Gamma$. Moreover, if $G$ is amenable, then $\pi$ admits almost invariant vectors (and hence $\Gamma$ is amenable).
\end{pro}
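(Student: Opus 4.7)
The plan is to verify the three assertions in turn. First, using the identification $\omega \leftrightarrow (g,h) \in G \times \Omega_1$ with $g = \omega(1)^{-1}$ and $h(t) = \omega(t)\omega(1)^{-1}$, a direct computation gives $\rho(\gamma)(g,h) = (g\,h(\gamma)^{-1},\gamma\cdot h)$. Since the counting measure $m_G$ is invariant under right translation in $G$ and $\mathbb{P}$ is $\Gamma$-invariant by hypothesis, the product $\mu = m_G\otimes\mathbb{P}$ is $\rho(\Gamma)$-invariant, and hence $\pi$ is automatically a unitary representation.

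The heart of the argument is the weak equivalence of $\pi$ with $\lambda_\Gamma$. Consider the unit vector $F_0(g,h) = \delta_{g,1}$ in $L_2(\mu)$. A direct computation gives $\pi(\gamma)F_0(g,h) = \delta_{g,h(\gamma^{-1})}$, so by the injectivity of $h \in [\Gamma,G]$ (and $h(1)=1$),
\[
\langle \pi(\gamma) F_0, F_0\rangle = \mathbb{P}\{h : h(\gamma^{-1}) = 1\} = \delta_{\gamma,1},
\]
matching the matrix coefficient of $\delta_e$ in $\lambda_\Gamma$; the GNS construction then identifies the cyclic subrepresentation generated by $F_0$ with $\lambda_\Gamma$, showing that $\lambda_\Gamma$ embeds in $\pi$. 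For the converse weak containment, injectivity of $h$ is again the key mechanism: for any simple tensor $F = \delta_{g_0}\otimes\phi$ with $\phi\in L_2(\Omega_1,\mathbb{P})$, the vectors $\pi(\gamma)F$ are supported on the slices $\{(g_0 h(\gamma^{-1}),h) : h\in\Omega_1\}$, which are pairwise disjoint for different $\gamma$; hence $\{\pi(\gamma)F\}_{\gamma\in\Gamma}$ is an orthogonal family and the cyclic subrepresentation at $F$ is a copy of $\lambda_\Gamma$. For a finite linear combination $F = \sum_i c_i\,\delta_{g_i}\otimes\phi_i$, the cyclic subrep at $F$ sits inside the closed sum $\sum_i\mathrm{cyc}(\delta_{g_i}\otimes\phi_i)$, itself a subrepresentation of a multiple of $\lambda_\Gamma$, so $\langle\pi(\cdot)F,F\rangle$ belongs to the reduced Fourier--Stieltjes space $B_\lambda(\Gamma)$. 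Since such $F$ are dense in $L_2(\mu)$ and $B_\lambda(\Gamma)$ is closed under pointwise limits of positive definite functions, every matrix coefficient of $\pi$ is in $B_\lambda(\Gamma)$, establishing the weak containment of $\pi$ in $\lambda_\Gamma$.

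For the almost-invariance assertion, assume $G$ is amenable and pick unit vectors $\phi_n \in \ell_2(G)$ almost invariant for the right regular representation $\rho_G$ of $G$. Set $F_n = \phi_n\otimes 1_{\Omega_1}$, which is a unit vector in $L_2(\mu)$ since $\mathbb{P}$ is a probability. A direct calculation gives $\pi(\gamma)F_n(g,h) = \phi_n(g\,h(\gamma^{-1})^{-1}) = (\rho_G(h(\gamma^{-1})^{-1})\phi_n)(g)$, whence
\[
\|\pi(\gamma)F_n - F_n\|^2 = \int_{\Omega_1}\|\rho_G(h(\gamma^{-1})^{-1})\phi_n - \phi_n\|_{\ell_2(G)}^2\, d\mathbb{P}(h) \longrightarrow 0
\]
by dominated convergence (the integrand is bounded by $4$ and tends pointwise to $0$). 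So $\pi$ admits almost invariant vectors; by the weak equivalence this transfers to $\lambda_\Gamma$, giving the amenability of $\Gamma$ by the usual Hulanicki criterion. The principal obstacle throughout is the weak containment of $\pi$ in $\lambda_\Gamma$; the crucial ingredient is the injectivity of $h \in [\Gamma,G]$, which creates the support-disjointness among $\pi(\gamma)$-translates of simple tensors and thereby matches the cyclic-subrepresentation structure of $\pi$ to that of $\lambda_\Gamma$.
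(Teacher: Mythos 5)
Your proof is correct, but it takes a genuinely different route to the weak equivalence of $\pi$ with $\lambda_\Gamma$, which is the heart of the matter. The paper constructs an explicit measurable fundamental domain $Y=\bigcup_{n\ge 1}A'_n(1)$ for the $\rho(\Gamma)$-action on $\Omega$ (using an enumeration of $G$, with the injectivity of $h$ guaranteeing that the pieces $A'_n(\gamma)$, $\gamma\in\Gamma$, are essentially disjoint), and thereby obtains the \emph{unitary} equivalence $\pi\simeq Id_{L_2(Y)}\otimes\rho_\Gamma$; weak equivalence and the transfer of almost invariant vectors then come for free. You instead argue at the level of matrix coefficients: injectivity of $h$ makes the supports $\{(g_0h(\gamma^{-1}),h)\}$ of the translates $\pi(\gamma)(\delta_{g_0}\otimes\phi)$ pairwise disjoint, so each such cyclic subrepresentation is a copy of $\lambda_\Gamma$ (this also gives the containment $\lambda_\Gamma\subset\pi$ directly via $F_0$), and you assemble weak containment of $\pi$ in $\lambda_\Gamma$ by density. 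Both arguments exploit injectivity in essentially the same way --- your slices are the paper's fundamental-domain pieces in disguise --- but yours trades the construction of a global fundamental domain for two standard facts that you should at least justify: that the closed span of finitely many invariant subspaces, each carrying $\lambda_\Gamma$, is a subrepresentation of a multiple of $\lambda_\Gamma$ (comparison of projections in $\pi(\Gamma)'$; alternatively, check directly that the off-diagonal coefficients $\gamma\mapsto\langle\pi(\gamma)(\delta_{g_i}\otimes\phi_i),\delta_{g_j}\otimes\phi_j\rangle$ lie in $\ell_1(\Gamma)\subset B_\lambda(\Gamma)$, again by injectivity of $h$), and that the positive definite functions coming from representations weakly contained in $\lambda_\Gamma$ are closed under pointwise limits. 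The invariance of $\mu$ and the almost-invariant-vector computation with $F_n=\phi_n\otimes 1_{\Omega_1}$ are essentially identical to the paper's (take a F\o lner \emph{sequence}, available since $G$ is countable, so that dominated convergence applies).
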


\begin{proof}
(Communicated by S. Vaes.) We first show that $\mu$ is invariant under $\rho(\gamma)$ $(\gamma\in\Gamma)$. For any $\omega\in\Omega$, say $\omega=g\cdot h$ with $h\in [\Gamma,G]\subset \Omega_1$ we have
\begin{equation}\label{Dix6eq11}
 \rho(\gamma)(g\cdot h) = (g(h(\gamma))^{-1})\cdot (\gamma\cdot h).
\end{equation}
(Recall $(\rho(\gamma)h)(t) = h(t\gamma) = (h(t\gamma)h(\gamma)^{-1})h(\gamma) = (\gamma\cdot h)(t) h(\gamma)$.) Therefore for any $F$ in $L_1(\mu)$ and $\gamma\in\Gamma$
\begin{align*}
 \int F(\rho(\gamma)\omega) d\mu(\omega) &= \sum\nolimits_g \int F(\rho(\gamma)(g\cdot\omega))\  d{\bb P}(\omega) = \sum\nolimits_{g\in G} \int F(g h(\gamma)^{-1}\cdot (\gamma\cdot\omega)) \ d{\bb P}(\omega)\\
&= \sum\nolimits_{g'\in G} \int F(g'\cdot(\gamma\cdot \omega)) d{\bb P}(\omega) = \sum\nolimits_{g'\in G} \int F(g'\cdot\omega) \ d{\bb P}(\omega) = \int F\ d\mu.
\end{align*}
Therefore $\rho(\gamma)$ preserves $\mu$ and $\pi$ (defined by $\pi(\gamma)F(\omega) = F(\rho(\gamma)^{-1} \cdot\omega))$ is unitary on $L_2(\mu)$.

To show that $\pi$ is weakly equivalent to $\lambda_\Gamma$, it suffices to show that the action \eqref{Dix6eq10} admits a fundamental domain $Y\subset\Omega$, i.e.\ a (measurable) subset such that the sets $\{\rho(\gamma)^{-1}\cdot Y\mid \gamma\in\Gamma\}$ form a disjoint covering of $\Omega$ (up to $\mu$-negligible sets). Indeed, assuming that this holds we may identify $\Omega$ with $Y\times\Gamma$ so that $L_2(\mu) \simeq L_2(Y,\mu_{|Y})\otimes \ell_2(\Gamma)$ and $\pi \simeq Id\otimes\rho_\Gamma$. Thus $\pi$ is weakly equivalent to $\rho_\Gamma$ or $\lambda_\Gamma$.

We will now show that such a $Y$ exists. Let $\{g_1,g_2,\ldots\}$ be an enumeration of $G$. Let $A_n = \{h\in \Omega\mid g_n\in h(\Gamma)\}$. Note that $\Omega = \cup A_n$. Let $A'_1 = A_1$ and
\[
 A'_n = A_n \backslash (A_1\cup\cdots\cup A_{n-1}).
\]
Let
\begin{equation}
 A'_n(\gamma) = \{h\mid h(\gamma) = g_n\} \backslash (A_1\cup\cdots\cup A_{n-1}).\tag*{$\forall \gamma\in\Gamma$}
\end{equation}
Note that the sets $A_n$ are invariant under $\rho(\gamma)$, and for each fixed $n$ the sets $\{A'_n(\gamma) \mid \gamma\in\Gamma\}$ are essentially disjoint since $\mu$ is supported by injective maps. Moreover we have
\[
 \rho(\gamma)^{-1} A'_n(1) = A'_n(\gamma).
\]
Thus $A'_n(1)$ is a fundamental domain for the $\rho$-action restricted to $A'_n$. It is now easy to check that $Y = \bigcup_{n\ge 1} A'_n(1)$ is the desired fundamental domain.

 Lastly, if $G$ is amenable, there is a net $(\xi_\alpha)$ in the unit sphere of $\ell_2(G)$ such that $\|\rho_G(g)\xi_\alpha-\xi_\alpha\|_2\to 0$ for any $g$ in $G$. Let then $\forall\omega_1\in \Omega_1$ $\forall g\in G$
\[
 F_\alpha(g\cdot\omega_1) = \xi_\alpha(g).
\]
Since ${\bb P}$ is a probability,  $F_\alpha$ is in the unit sphere of $L_2(\mu)$ and since by \eqref{Dix6eq11} 
\[
 F_\alpha(\rho(\gamma)(g\cdot\omega_1)) = F_\alpha((g\omega_1(\gamma)^{-1})\cdot (\gamma\cdot h)) = \xi_\alpha  (g\omega_1(\gamma)^{-1})
\]
 we have $\|\pi(\gamma^{-1})F_\alpha-F_\alpha\|_{L_2(\mu)}\to 0$ $\forall \gamma\in \Gamma$. Since $\pi \simeq Id\otimes\rho_\Gamma$, $\rho_\Gamma$ itself admits almost invariant vectors and hence $\Gamma$ is amenable.
\end{proof}

Monod mentions in \cite{M} that, as a corollary of Theorem~\ref{Dixthm2-4}
and   Proposition \ref{Dixcor2-5+}, we have:

\begin{cor}\label{Dixcor2-5}
An infinite group $G$ is non-amenable iff ${\bb F}_2 \overset{r}{\subset} G$.
\end{cor}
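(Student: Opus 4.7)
The plan is to combine the Gaboriau--Lyons theorem with the two propositions just established; the corollary is essentially a direct consequence of facts already on the table, so the writeup should be short and the main job is bookkeeping rather than new ideas.

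For the forward implication, assume $G$ is a countable infinite non-amenable group. First I apply Theorem~\ref{Dixthm2-4} to conclude ${\bb F}_2 \overset{\circ}{\subset} G$, i.e.\ there exist free, measure-preserving actions of ${\bb F}_2$ and $G$ on a standard probability space $(X,\mu)$ with ${\bb F}_2 x\subset Gx$ for a.e.\ $x$. Then I invoke the preceding Proposition (${\Gamma \overset{\circ}{\subset} G}$ implies ${\Gamma \overset{r}{\subset} G}$) with $\Gamma={\bb F}_2$: the map $x\mapsto h_x$ defined by $\gamma x = h_x(\gamma)\,x$ is measurable, takes values in $[{\bb F}_2,G]$ by the freeness of the actions, and its law ${\bb P}$ on $[{\bb F}_2,G]$ is ${\bb F}_2$-invariant because the actions preserve $\mu$. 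Hence ${\bb F}_2 \overset{r}{\subset} G$.

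For the converse I argue by contraposition. Suppose ${\bb F}_2 \overset{r}{\subset} G$ and $G$ is amenable. Applying Proposition~\ref{Dixcor2-5+} with $\Gamma={\bb F}_2$ yields that ${\bb F}_2$ is amenable, which contradicts the classical fact that the free group on two generators is non-amenable. Therefore $G$ must be non-amenable.

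There is no real obstacle here; both directions are one-line deductions from results already proved in the preceding pages. The only minor point to mention is the countability hypothesis implicit in Definition~\ref{def} and in Theorem~\ref{Dixthm2-4}: the corollary should be understood for countable infinite $G$, which matches the setting of all the ingredients used. One could also highlight, at the end, that this corollary gives a genuinely new flavor of ``containment of ${\bb F}_2$'' that does characterize non-amenability, in contrast to the honest subgroup embedding ruled out by Olshanskii and by Adyan's Burnside groups.
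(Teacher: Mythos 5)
Your proof is correct and follows exactly the route the paper intends: the paper states this corollary as an immediate consequence of Theorem~\ref{Dixthm2-4}, the proposition that $\Gamma\overset{\circ}{\subset}G$ implies $\Gamma\overset{r}{\subset}G$, and Proposition~\ref{Dixcor2-5+}, which is precisely your argument. Your remark about the implicit countability hypothesis is a reasonable observation that the paper itself glosses over.
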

\begin{rk}The sequel implicitly contains another proof that $G$ is not-amenable if ${\bb F}_2 \overset{o}{\subset} G$.
Indeed, we will show that, if ${\bb F}_2 \overset{o}{\subset} G$,  $A\wr G={A}^{(G)}\rtimes G$ is not unitarizable, 
and hence not amenable,
with (say) $A=\bb Z$. A fortiori, since ${\bb Z}^{(G)}$ is amenable, $G$ is not-amenable.
\end{rk}

\section{The Monod--Shalom induction}\label{Dixsec3}

\noindent 

Let us assume $\Gamma \overset{\circ}{\subset} G$, so that we have actions of $\Gamma$ and $G$ so that $\Gamma x\subset Gx$ for a.e.\ $x$. Let ${\cl R}\subset X\times X$ be the measurable equivalence relation associated to the $G$-action, i.e.
\[
 {\cl R} = \{(s,x)\mid s\in Gx, \ x\in X\}.
\]
We equip ${\cl R}$ with a $\sigma$-finite measure $M$ defined (for suitably restricted $F$, say for $F$ bounded, measurable and supported by a finite union of equivalence classes) by
\[
 \int F~dM = \sum\nolimits_{g\in G} \int F(gx,x)d\mu(x).
\]
We will let $G$ act on ${\cl R}$ on the left so that
\begin{equation}
 (g\cdot F) (s,x) = F(g^{-1}s,x)\tag*{$\forall(s,x)\in {\cl R}\ \ \forall g\in G$}
\end{equation}
and we let $\Gamma$ act on the right so that
\begin{equation}
 (\gamma\cdot F) (s,x) = F(s,\gamma^{-1}x).\tag*{$\forall \gamma\in\Gamma$}
\end{equation}\\ 
Then these actions obviously commute, and since both $G\curvearrowright X$ and $\Gamma \curvearrowright X$ preserve the measure $\mu$, the actions just defined on $F$ extend to two commuting unitary representations of $G$ and $\Gamma$ on $L_2({\cl R},M)$. Let us denote $L_2({\cl R}) = L_2({\cl R},M)$. \\ The $G$-action on ${\cl R}$ obviously admits the diagonal $\Delta = \{(x,x)\mid x\in X\} \subset X\times X$ as a fundamental domain:\ Indeed we have a 1-1 correspondence $Gx\times\{x\}\leftrightarrow (x,x)\in\Delta$, between the orbits of the $G$-actions on ${\cl R}$ and the points of $\Delta$.

Although it is a bit less obvious, the $\Gamma$-action also admits a fundamental domain $Y\subset X\times X$ that can be chosen so that $\Delta\subset Y$. One way to see this is to enumerate the group $G$ as $\{g_n\mid n\ge 0\}$. We set $g_0=1$ for convenience. We then define recursively for any $x$ in $X$
\begin{align*}
 N_0(x) &= 0,\quad N_1(x) = \min\{n\mid g_nx\notin \Gamma x\},\ldots\\
N_k(x) &= \min\{n\mid g_nx\notin \Gamma x\cup \Gamma g_{N_1}x \cup\cdots\cup g_{N_{k-1}}x\}.
\end{align*}
We then set $Y = \{(x,g_{N_k(x)}x)\mid k\ge 0\}$. Clearly our choices for $N_k$ are measurable. Consider a $\Gamma$-orbit $\Gamma(s,x) = s\times \Gamma x$. Note that $\Gamma x\subset Gx$, and $x\in Gs$ so $\Gamma x\subset Gs$ can be clearly identified with $\Gamma g_{N_k(s)}s$ for some $k$. This shows that the correspondence $(s,g_{N_k(s)}s) \leftrightarrow \Gamma(s,x)$ is bijective (up to negligible sets). In other words, $Y$ is a fundamental domain for the $\Gamma$-action. Thus we have a measure isomorphism
\[
 Y\times\Gamma \longrightarrow {\cl R}
\]
of the form $(y,\gamma) \longrightarrow \gamma^{-1}y$ $(\gamma\in\Gamma,y\in Y)$ that gives us new coordinates to represent a point of ${\cl R}$ by a pair $(y,\gamma)$.

Note that this is a measure preserving isomorphism between $({\cl R},M)$ and $Y\times\Gamma$ equipped with the product of the restriction $M_{|Y}$ of $M$ to $Y$ with the counting measure on $\Gamma$. In these new coordinates, the action of $\Gamma$ is easy to describe:\ for any $t$ in $\Gamma$, $t^{-1}(y,\gamma) = t^{-1}\gamma^{-1} y = (y,\gamma t)$. Thus $t$ acts just like right translation on the $\Gamma$ coordinate. More precisely, for any $F\in L_2({\cl R}) = L_2(Y\times\Gamma)$ we have
\begin{equation}
 (t\cdot F) (y,\gamma) = F(t^{-1}(y,\gamma))= F(t^{-1}\gamma^{-1}y)
  = (id\otimes \rho_\Gamma(t))(F) (y,\gamma) \tag*{$\forall t\in \Gamma$}
\end{equation}
where $\rho_\Gamma\colon \ \Gamma\to B(\ell_2\Gamma)$ is the right regular representation. 

To describe the action of $G$ with the new coordinates, we define $\alpha(g,y)\in \Gamma$ and $g\cdot y\in Y$ by the identity
\begin{equation}\label{Dixeq3-2}
 gy = (g\cdot y,\alpha(g,y))
\end{equation}
or equivalently $gy = \alpha(g,y)^{-1}(g\cdot y)$. Warning: \ The reader should distinguish the original $G$-action $gy$ from $g\cdot y$ that we just defined.

Since the $G$- and $\Gamma$-actions commute, we have
\begin{gather*}
 g(y,\gamma) = g\gamma^{-1}y = \gamma^{-1}gy = \gamma^{-1}\alpha(g,y)^{-1}(g\cdot y)
 = (g\cdot y,\alpha(g,y)\gamma).
\end{gather*}
It is  important to observe that the first component $g\cdot y$ of $g(y,\gamma)$ does not depend on $\gamma$, the  second component of $g=(y,\gamma)$

This shows that the subspace $L^\infty(Y,B(\ell_2\Gamma))\subset B(L_2(Y\times \Gamma))$ formed of multiplication operators by a function $\varphi\colon \ Y\to B(\ell_2\Gamma)$ is stable by the action of $G$. Indeed, denoting by ${\cl M}_\varphi$ this multiplication operator acting on $L_2(Y\times\Gamma)$ we have 
\begin{equation} \label{Dixeq3-3}
 \forall g\in G\quad \pi(g) {\cl M}_\varphi\pi(g)^{-1} = {\cl M}_{g\cdot \varphi} {}
\end{equation}
where 
\begin{equation}\label{Dixeq3-4}
(g\cdot \varphi) (y) \overset{\text{def}}{=} \alpha(g^{-1},y)^{-1} \cdot \varphi(g^{-1}\cdot y).
\end{equation}  
More explicitly, if we set $a=\alpha(g^{-1},y)^{-1}$ we have  $(g\cdot \varphi) (y)=\lambda_{\Gamma} (a ) \varphi(g^{-1}\cdot y)\lambda_{\Gamma} (a )^{-1}$.
 
In particular, this shows that the subspace $L^\infty(Y)\otimes 1 \subset B(L_2(Y\times \Gamma))$ formed of multiplications by a function depending only on $y$ is stable under the action of $G$:\ Indeed, if $\varphi(y) = \Phi(y)Id_{\ell_2(\Gamma)}$, we have
\begin{equation}\label{Dixeq3-5}
 \pi(g) {\cl M}_\varphi\pi(g^{-1}) = {\cl M}_{\widetilde \varphi}
\end{equation}
where $\widetilde \varphi(y) = \Phi(g^{-1}\cdot y) Id_{\ell_2\Gamma}$. \\ 
In the next statement, we set $B = B(L_2({\cl R}))$ but actually $B$ can be any dual Banach space that is both a $G$-module and a $\Gamma$-module with commuting actions (i.e.\ $B$ is a $(G\times\Gamma)$-module).

We consider the space ${\cl L} = L_\infty({\cl R};B)$ of weak-$*$ measurable essentially bounded functions with the usual norm. We equip ${\cl L}$ with a $G$-module action by setting
\begin{equation}
 (g\cdot F)(\omega) = g\cdot F(g^{-1}\omega),\tag*{$\forall F\in {\cl L}$}
\end{equation}
and similarly for the $\Gamma$-action. We now formulate   Monod and Shalom's ``induction in cohomology":

\begin{pro}\label{Dixpro3-5}
Let ${\cl L} = L_\infty({\cl R};B)$ as above, let ${\cl L}^\Gamma\subset {\cl L}$ be the submodule formed of the $\Gamma$-invariant elements, i.e.\ those $F$ such that
\begin{equation}
 \gamma\cdot F(\gamma^{-1}\omega) = F(\omega),\tag*{$\forall \gamma\in\Gamma \ \ \forall\omega \in {\cl R}$}
\end{equation}
and similarly for ${\cl L}^G\subset {\cl L}$. Note that (since the actions commute) ${\cl L}^\Gamma$ (resp.\ ${\cl L}^G$) is a $G$-module (resp.\ $\Gamma$-module). We have then an embedding (i.e.\ an injective linear map)
\begin{equation}\label{Dixeq3-10}
 H^1_b(\Gamma,{\cl L}^G) \subset H^1_b(G,{\cl L}^\Gamma)
\end{equation}
and actually for any $n\ge 1$, $H^n_b(\Gamma,{\cl L}^G) \subset H^n_b(G,{\cl L}^\Gamma)$.
\end{pro}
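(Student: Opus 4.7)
The plan is to construct a natural cochain map $\Phi : \cl{C}_n(\Gamma,\cl{L}^G) \to \cl{C}_n(G,\cl{L}^\Gamma)$ by exploiting the commuting $G$- and $\Gamma$-actions on $\cl{R}$, and then to verify that it descends to an injection on cohomology. The map $\Phi$ is built around the $\Gamma$-valued cocycle $\alpha(g,y)$ defined in \eqref{Dixeq3-2}, which is precisely what converts $G$-tuples into $\Gamma$-tuples once evaluated on the fundamental domain $Y$.

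First I would use the identifications $\cl{L}^G \cong L_\infty(X;B)$ (via restriction to the $G$-fundamental domain $\Delta\cong X$, with its diagonal $\Gamma$-action) and $\cl{L}^\Gamma \cong L_\infty(Y;B)$ (via restriction to $Y$, with the $G$-action given by \eqref{Dixeq3-3}--\eqref{Dixeq3-4}). For a $\Gamma$-cocycle $f : \Gamma^{n+1} \to \cl{L}^G$, I would define $\widehat f : G^{n+1} \to \cl{L}^\Gamma$ by a formula of the type
\[
\widehat f(g_0,\ldots,g_n)(y) \;=\; f\bigl(\alpha(g_0,y)^{-1},\ldots,\alpha(g_n,y)^{-1}\bigr)(\mathrm{pr}(y)),
\]
with $\mathrm{pr}(y)\in X$ denoting the appropriate coordinate of $y\in Y\subset X\times X$. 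Boundedness of $\widehat f$ is automatic; $G$-equivariance follows from the $\alpha$-cocycle identity $\alpha(g_1g_2,y)=\alpha(g_1,g_2\cdot y)\,\alpha(g_2,y)$ combined with the $\Gamma$-equivariance of $f$; and $\partial\widehat f = 0$ drops out of $\partial f = 0$, since the coboundary operator acts slot-by-slot on the same substitution. Thus $\Phi$ is a well-defined chain map inducing a linear map on $H^n_b$ for every $n\ge 1$.

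The main obstacle is showing that the induced map on cohomology is \emph{injective}. Given $\widehat f = \partial \widehat F$ for some $\widehat F \in \cl{C}_{n-1}(G,\cl{L}^\Gamma)$, we must produce $F \in \cl{C}_{n-1}(\Gamma,\cl{L}^G)$ with $\partial F = f$. The natural strategy is to invert $\Phi$ by restricting $\widehat F$ to the diagonal $\Delta \subset Y$ (which is itself a $G$-fundamental domain in $\cl{R}$) and reinterpreting via the orbit-equivalence identification; the verification that this restriction lands in $\cl{L}^G$ and trivializes $f$ mirrors the ``averaging'' idea of Lemma \ref{Dixlem3-4}, using measure-preservation of both actions together with the cocycle identity for $\alpha$. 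Equivalently, one can package the whole argument as a spectral-sequence edge-map: since $\cl{L}\cong L_\infty(Y;L_\infty(\Gamma;B))$ is relatively $\Gamma$-injective (being $\Gamma$-coinduced from the trivial subgroup), $H^q_b(\Gamma,\cl{L}) = 0$ for $q\ge 1$; one of the two spectral sequences for the double complex $\cl{C}_p(G;\cl{C}_q(\Gamma;\cl{L}))$ then degenerates, and the resulting edge map is precisely the claimed embedding $H^n_b(\Gamma,\cl{L}^G)\hookrightarrow H^n_b(G,\cl{L}^\Gamma)$.
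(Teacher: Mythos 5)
Your forward map is essentially the paper's: writing $\omega=\gamma^{-1}y$ and using \eqref{Dixeq3-2}, the substitution $g\mapsto\alpha(g^{-1},y)^{-1}$ is exactly the paper's $\chi_\omega(g^{-1})$, where $\chi_\omega(g)=\gamma$ iff $g\omega\in\gamma Y$ (watch the placement of the inverses, which your formula gets slightly wrong), and the equivariance and $\partial$-compatibility checks go through as you say. The genuine gap is in the injectivity step, which is the entire content of the proposition. ``Restricting $\widehat F$ to the diagonal $\Delta\subset Y$'' does not produce an element of ${\cl C}_{n-1}(\Gamma,{\cl L}^G)$: the cochain $\widehat F$ still takes arguments in $G^{n}$, and you never say how a $\Gamma$-tuple is to be converted into a $G$-tuple. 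What is required is the map in the \emph{other} direction, $\kappa_\omega\colon\Gamma\to G$ defined by $\kappa_\omega(\gamma)=t$ iff $\gamma\omega\in t\Delta$ --- this is where the orbit inclusion $\Gamma x\subset Gx$ is actually used --- giving a pullback $\kappa^*$ on $G$-cochains. One then verifies that $\kappa^*\varphi$ lands in ${\cl C}_{n-1}(\Gamma,{\cl L}^G)$ and, crucially, that $\chi_\omega(\kappa_\omega(\gamma^{-1})^{-1})=\gamma$ (equation \eqref{Dixeq3-11}), which forces $\kappa^*\chi^*=\mathrm{id}$ already at the level of cochains; since $\chi^*$ and $\kappa^*$ commute with $\partial$, injectivity on $H^n_b$ follows at once (if $\chi^*f=\partial\Psi$ then $f=\kappa^*\chi^*f=\partial(\kappa^*\Psi)$). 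Identity \eqref{Dixeq3-11} depends on having chosen the $\Gamma$-fundamental domain $Y$ so that $\Delta\subset Y$; your sketch never invokes this, and it is the one non-automatic point of the whole proof.

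Two further inaccuracies in your fallback arguments. First, the mechanism here is \emph{not} the averaging of Lemma~\ref{Dixlem3-4}: no integration over $X$ occurs anywhere in this proof; the inverse is an exact pointwise retraction, and invoking measure-preservation is a red herring. Second, the spectral-sequence packaging is a heuristic, not a proof as stated: to extract an injection $H^n_b(\Gamma,{\cl L}^G)\hookrightarrow H^n_b(G,{\cl L}^\Gamma)$ you would need, besides the relative injectivity of ${\cl L}$ as a $\Gamma$-module (which does follow from ${\cl R}\simeq Y\times\Gamma$), the corresponding acyclicity on the $G$-side via the fundamental domain $\Delta$, together with a degeneration and convergence argument for the double complex --- and in bounded cohomology such arguments are delicate because the relevant category is not abelian and images need not be closed. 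None of this is checked, so as written the proposal does not establish injectivity.
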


\begin{proof}
For each $\omega\in{\cl R}$, we have a map $\chi_\omega\colon \ G\to\Gamma$ defined by $\chi_\omega(g)=\gamma$ if $g\omega \in \gamma Y$ (i.e.\ $g\omega = (y,\gamma^{-1})$ for some $y$ in $Y$). Similarly we have a map $\kappa_\omega\colon \ \Gamma\to G$ defined by $\kappa_\omega(\gamma) = t$ if $\gamma\omega \in t\Delta$, i.e. $\gamma \omega = (tx,x)$ for some $x\in X$. We claim that 

\begin{equation}\label{Dixeq3-11}\chi_\omega(\kappa_\omega(\gamma^{-1})^{-1})=\gamma.\end{equation}

Indeed, $\kappa_\omega(\gamma^{-1})=g^{-1}$ means that $\gamma^{-1}\omega \in g^{-1}\Delta$ and hence (since the actions commute) $g\omega\in \gamma\Delta$ but since $\Delta\subset Y$, we have $g\omega \in \gamma Y$ and hence $\chi_\omega(g)=\gamma$.

The embedding \eqref{Dixeq3-10} is then defined as follows:\ for any $f\colon \ \Gamma^2\to {\cl L}^G$ we define $\chi^*f \colon \ G^2\longrightarrow {\cl L}^\Gamma$ as follows:
\[
 \chi^*f(g_0,g_1)(\omega) = f(\chi_\omega(g^{-1}_0), \chi_\omega(g^{-1}_1))(\omega).
\]
Similar for any $\varphi\colon \ G^2\to {\cl L}^\Gamma$ we define $\kappa^*\varphi\colon \ \Gamma^2\to {\cl L}^G$ by:
\[
 \kappa^*\varphi(\gamma_0,\gamma_1)(\omega) = \varphi(\kappa_\omega(\gamma^{-1}_0), \kappa_\omega(\gamma^{-1}_1))(\omega).
\]
Note that since the groups are countable, $\omega \mapsto \chi_\omega(g^{-1})$ is measurable and ${\it countably \ valued}$ so it is easy to verify measurability of $ \chi^*f(g_0,g_1)$ and $ \kappa^*\varphi(\gamma_0,\gamma_1)$.\\
 Let us now check that $\chi^*f$ defines a cocycle. Note that $\chi_\omega((gg_j)^{-1}) = \gamma_j$ means $g^{-1}_jg^{-1}\omega\in \gamma_jY \Leftrightarrow g^{-1}_j(g^{-1}\omega)\in \gamma_jY$ and hence $\chi_\omega((gg_j)^{-1}) = \chi_{g^{-1}\omega}(g^{-1}_j)$. By definition of ${\cl L}^\Gamma$, this implies (since $f(\cdot,\cdot)\in {\cl L}^\Gamma$):
\begin{align*}
 \chi^*f(gg_0,gg_1) &= f(\chi_{g^{-1}\omega}(g^{-1}_0), \chi_{g^{-1}\omega}(g^{-1}_1))(\omega)\\
&= g\cdot f(\chi_{g^{-1}\omega}(g^{-1}_0), \chi_{g^{-1}\omega}(g^{-1}_1)) (g^{-1}\omega)\\
&= (g\cdot \chi^*f)(g_0,g_1).
\end{align*}
Let $\varphi = \chi^*f$. The verification that $\partial\varphi=0$ is immediate from $\partial  f = 0$ and the definition of $\chi^*f$. Actually, the definition of $\chi^*$ can be extended to functions on $G^{n+1}$ for any $n\ge 1$ and we have obviously $\chi^*\partial = \partial\chi^*$. Since $\chi^*$ obviously preserves boundedness, this shows that $\chi^*$ defines a linear map as required by \eqref{Dixeq3-10}. Lastly by \eqref{Dixeq3-11}, we have $\kappa^*\chi^*f=f$ so that $\chi^*$ must be injective. The proof for any $n>1$ is identical.
\end{proof}

\section{Main result}\label{Dixsec4}

\begin{thm}\label{Dixthm4-10}
If $\Gamma = {\bb F}_2$ orbitally embeds in $G$ then for any infinite Abelian group $A$, the wreath product $A\wr G$ is not unitarizable.
\end{thm}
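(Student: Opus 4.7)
The plan is to apply Proposition \ref{Dixpro1-1}(ii): I will exhibit a unitary representation $\sigma$ of $A\wr G$ on some Hilbert space $H_\sigma$ together with a non-trivial bounded 1-cohomology class in $H^1_b(A\wr G, B(H_\sigma))$. The only source of non-triviality at our disposal is Proposition \ref{Dixpro1-1}(i) for $\mathbb{F}_2$; the orbital embedding $\mathbb{F}_2\overset{\circ}{\subset} G$ together with the Monod--Shalom induction of Proposition \ref{Dixpro3-5} provides the transport mechanism to $G$; and the infinite abelian group $A$ will be used to manufacture a Bernoulli-type probability space on which $A\wr G$ acts unitarily, furnishing the extra module structure that makes $A^{(G)}$ act non-trivially rather than via pullback.

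Concretely, I would first set up Section \ref{Dixsec3} for the orbital embedding: form $\mathcal{R}$, a fundamental domain $Y\supset \Delta$ for the $\Gamma=\mathbb{F}_2$-action, and the identification $L_2(\mathcal{R})\simeq L_2(Y)\otimes \ell_2(\Gamma)$ with $\Gamma$ acting as $\mathrm{id}\otimes \rho_\Gamma$ and $G$ acting as in \eqref{Dixeq3-3}--\eqref{Dixeq3-4}. Then I would form the Bernoulli probability space $(Z,\zeta)=(\widehat A, m_{\widehat A})^G$ and let $A\wr G$ act unitarily on $L_2(Z,\zeta)$ in the standard way (shift on the $G$-coordinate, multiplication by the evaluation characters on $A^{(G)}$); the hypothesis that $A$ is \emph{infinite} is used precisely here, to make the restriction of this representation to $A^{(G)}$ weakly mixing. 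Tensoring the $G$-representation on $L_2(\mathcal{R})$ with this Bernoulli representation, and letting $A^{(G)}$ act only on the Bernoulli factor, yields the candidate unitary representation $\sigma$ of $A\wr G$ on $H_\sigma = L_2(\mathcal{R})\otimes L_2(Z)$.

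For the cocycle, I would take the non-trivial class in $H^1_b(\mathbb{F}_2, B(\ell_2\mathbb{F}_2))$ from Proposition \ref{Dixpro1-1}(i), lift it to $H^1_b(\mathbb{F}_2, L_\infty(Z;B(\ell_2\mathbb{F}_2)))$ by Lemma \ref{Dixlem3-4} (applied with $(X,\mu)=(Z,\zeta)$ acted on by $\Gamma$ through the Bernoulli representation), and then induce it to $H^1_b(G,\mathcal{L}^\Gamma)$ by Proposition \ref{Dixpro3-5}, with the coefficient module $B$ chosen so that $\mathcal{L}^\Gamma$ embeds as a $G$-submodule of $B(H_\sigma)$ through the coordinates $L_2(\mathcal{R})=L_2(Y)\otimes \ell_2(\Gamma)$ and the Bernoulli factor. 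Inflating the resulting $G$-class to $A\wr G$ produces the candidate bounded 1-cocycle on $A\wr G$ with values in $B(H_\sigma)$.

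The hard part is verifying that this inflated cocycle is not a coboundary for $A\wr G$. If an operator $T\in B(H_\sigma)$ were to trivialize it, the $A\wr G$-cocycle condition restricted to $A^{(G)}$ would force $T$ to be equivariant under the Bernoulli action of $A^{(G)}$ on $B(H_\sigma)$; weak mixing of that action--which is exactly where ``$A$ infinite'' is indispensable--would then confine $T$ to the $A^{(G)}$-fixed subalgebra, which in these coordinates reduces to $B(L_2(\mathcal{R}))$ tensored with the scalars. Pushing such a $T$ back through Proposition \ref{Dixpro3-5} and Lemma \ref{Dixlem3-4} would provide a trivialization of the original $\mathbb{F}_2$-cocycle on $B(\ell_2\mathbb{F}_2)$, contradicting Proposition \ref{Dixpro1-1}(i). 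Making this chain of ``pushing back along the orbital embedding'' precise--and in particular verifying that the weak-mixing averaging really does commute with the Monod--Shalom identifications--is what I expect to be the principal technical hurdle.
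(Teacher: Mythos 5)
Your first two-thirds follows the paper: Proposition~\ref{Dixpro1-1}(i) for ${\bb F}_2$, Lemma~\ref{Dixlem3-4}, the identifications $L_\infty(X;B)\simeq {\cl L}^G$ and ${\cl L}^\Gamma\simeq L_\infty(Y;B)$, and the Monod--Shalom induction of Proposition~\ref{Dixpro3-5} to get a bounded cocycle on $G$ that is non-trivial \emph{as a cocycle into} $L_\infty(Y;B)\subset B(L_2(Y\times\Gamma))$. The gap is in how you use $A$. You put $A^{(G)}$ on a separate Bernoulli tensor factor $L_2(Z)$ and invoke weak mixing to ``confine $T$ to $B(L_2({\cl R}))$ tensored with the scalars.'' This step fails: the fixed-point set of the conjugation action of a group of unitaries is its commutant, irrespective of any mixing property, and since $\sigma(A^{(G)})$ acts on $L_2(Z)$ by multiplication operators generating the maximal abelian algebra $L_\infty(Z)$, the commutant of $1\otimes\sigma(A^{(G)})$ in $B(L_2({\cl R}))\overline{\otimes}B(L_2(Z))$ is $B(L_2({\cl R}))\overline{\otimes}L_\infty(Z)$ --- enormously larger than $B(L_2({\cl R}))\otimes\bb C$. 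Even after averaging out the $Z$-variable (the Lemma~\ref{Dixlem3-4} trick), you only learn that $T$ can be taken in $B(L_2({\cl R}))=B(L_2(Y\times\Gamma))$. But that is exactly the case your argument cannot exclude: the induced cocycle is known to be non-trivial only into the multiplication algebra $L_\infty(Y;B)$, not into all of $B(L_2(Y\times\Gamma))$. This is precisely the ``one last problem'' that the whole construction must solve, and your representation of $A^{(G)}$ on an external factor gives no leverage on it. (A secondary confusion: you apply Lemma~\ref{Dixlem3-4} with $\Gamma$ acting on $Z=\widehat A^G$, but ${\bb F}_2$ has no given action there --- only $G$ shifts the coordinates --- and replacing $(X,\mu)$ by $(Z,\zeta)$ breaks the identification $L_\infty(X;B)\simeq{\cl L}^G$ that feeds the induction.)

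What the paper does instead is to introduce \emph{no} new Hilbert space at all: it represents $A^{(G)}$ \emph{inside} $B(L_2(Y\times\Gamma))$ so that $\widehat\pi(A^{(G)})''=L_\infty(Y)\otimes 1$. The infiniteness of $A$ is used to identify the group von Neumann algebra of $A$ with $L_\infty(Y)$ (a diffuse abelian von Neumann algebra), giving $\sigma\colon A\to L_\infty(Y)\otimes 1$ with $\sigma(A)''=L_\infty(Y)\otimes 1$; the conjugates $\pi_g(a)=\pi(g)\sigma(a)\pi(g)^{-1}$ all land in this abelian algebra by \eqref{Dixeq3-5}, hence have mutually commuting ranges, and Remark~\ref{rem3-6} assembles them into $\widehat\sigma$ on $A^{(G)}$ satisfying the covariance \eqref{Dixeq4-16} needed to define $\widehat\pi$ on $A\wr G$. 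Then a trivializing $T$ for the inflated cocycle must commute with $\widehat\pi(A^{(G)})$, i.e.\ lie in $(L_\infty(Y)\otimes 1)'=L_\infty(Y;B)$ --- exactly the module in which the induced cocycle is known non-trivial, which is the contradiction. So the correct role of ``$A$ infinite'' is von Neumann algebraic ($L(A)\cong L_\infty(Y)$), not dynamical (weak mixing), and the key design constraint your proposal misses is that the image of $A^{(G)}$ must have commutant equal to $L_\infty(Y;B)$ inside the ambient $B(H)$.
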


\begin{proof}
By Proposition~\ref{Dixpro1-1}, it suffices to show that $A\wr G$ admits a unitary representation $\widehat\pi \colon \ A\wr G\longrightarrow B(H)$ for which there is a bounded $\widehat\pi$-derivation that is not inner. We use the preceding notation for $X,{\cl R}$ and $Y$. We start from (cf.\ Proposition~\ref{Dixpro1-1}(i)) a non-inner bounded derivation $D\colon \ \Gamma\longrightarrow B(\ell_2\Gamma)$. Let $B = B(\ell_2\Gamma)$. By Lemma~\ref{Dixlem3-4} and Remark \ref{Dixrem1-2}, we have a non-trivial [1]-cocycle $\hat f\colon \ \Gamma\longrightarrow L^\infty(X,\mu;B)$, defined  by $\hat f(\gamma) = D(\gamma)\lambda(\gamma)^{-1}$ where $\lambda=\lambda_\Gamma$. (Actually, $  D$ itself, when viewed as a derivation into $L^\infty(X,\mu;B)$, is not inner.)
We equip $B$ with the $\Gamma$-module structure associated to the action of $\Gamma$ by conjugation, i.e.
\begin{equation}
 \gamma\cdot b = \lambda(\gamma)b\lambda(\gamma)^{-1}.\tag*{$\forall\gamma\in\Gamma\quad \forall b\in B$}
\end{equation}
Thus ${\cl L} = L_\infty({\cl R};B)$ is a $\Gamma$-module defined by:
\begin{equation}\label{Dixeq4-7}
\forall\gamma\in\Gamma\quad \forall F\in{\cl L}  \qquad (\gamma F)(\omega) = \gamma\cdot F(\gamma^{-1}\omega).
\end{equation}
 Note that $\varphi\in {\cl L}^\Gamma$ is characterized by
\begin{equation}\label{Dixeq4-10}
\forall\gamma\in\Gamma  \qquad \varphi(\omega)=\gamma\cdot\varphi(\gamma^{-1}\omega).
\end{equation}
Of course, the preceding two equalities are meant to hold   for a.a.  $\omega \in{\cl R}$.\\
As a $G$-module we equip $B$ with the trivial action, i.e.
\begin{equation}
 gb=b.\tag*{$\forall b\in B\quad \forall g\in G$}
\end{equation}
Then ${\cl L} = L_\infty({\cl R};B)$ is a $G$-module with respect to the action defined by
\begin{equation}\label{Dixeq4-8}
\forall F\in {\cl L}\quad \forall g\in G\qquad\qquad (g\cdot F)(\omega) = F(g^{-1}\omega).
\end{equation}
Note that $F\in {\cl L}^G$ iff we have
\begin{equation}\label{Dixeq4-11}
 \forall g\in  G\qquad\qquad F(g^{-1}\omega) = F(\omega)\qquad \forall \omega\in {\cl R}.
\end{equation}
We claim that, as $\Gamma$-modules, we have
\begin{equation}\label{Dixeq4-9}
 L_\infty(X,\mu;B) \simeq L_\infty({\cl R},B)^G.
\end{equation}
Indeed, for any $F\in L_\infty({\cl R},B)^G$ we have
\[
 F(x,x)=F(s,x) \qquad \forall(s,x)\in {\cl R}
\]
since $(s,x)$   belongs to $G\cdot (x,x)$, and, by \eqref{Dixeq4-8}, any $F\in {\cl L}^G$ is constant on the $G$-orbits.
Let $\widetilde F(x) = F(x,x)$. Clearly $F\to \widetilde F$ is an isometric $\Gamma$-module isomorphism from ${\cl L}^G$  onto $L_\infty(X,\mu;B)$ (to check this note that for any $\gamma\in \Gamma$
 we have $F(x,\gamma^{-1}x)=F(\gamma^{-1}x,\gamma^{-1}x)$ since $x$ and $\gamma^{-1}x$ are in the same $G$-orbit, namely $Gx$).

From our claim \eqref{Dixeq4-9}, we deduce that $H^1_b(\Gamma, {\cl L}^G)\ne \{0\}$. By Proposition~\ref{Dixpro3-5}, we deduce that $H^1_b(G,{\cl L}^\Gamma)\ne 0$. We now claim that, as $G$-modules, we have
\begin{equation}\label{Dixeq4-12}
 {\cl L}^\Gamma \simeq L_\infty(Y;B)
\end{equation}
where  $L_\infty(Y;B)$ is equipped with the specific $G$-module structure defined in \eqref{Dixeq3-3}. Indeed, let $\varphi\in {\cl L}^\Gamma$. Then using the ``new coordinates'' we have ${\cl R} \simeq Y\times \Gamma$ and with these the $\Gamma$-invariance of $\varphi$ yields
\[
 \varphi(y,\gamma) = \gamma^{-1}\varphi(y,1).
\]
Define then $F_\varphi \in L_\infty(Y,B)$ by $F_\varphi(y) = \varphi(y,1)$. A simple calculation shows that
\[
 (g\cdot\varphi)(y,\gamma) = \varphi(g^{-1}(y,\gamma)) = \varphi(g^{-1}\gamma^{-1}y) = \varphi(\gamma^{-1}g^{-1}y) = \gamma^{-1}\varphi(g^{-1}y)
\]
where the last two equalities follow respectively from the commutation of the $G$- and $\Gamma$-actions and from \eqref{Dixeq4-10}. But then, by \eqref{Dixeq3-2} we have
\[
 g^{-1}y = \alpha(g^{-1},y)^{-1} (g^{-1}\cdot y)
\]
and hence we find by \eqref{Dixeq4-10} again
\[
 (g\cdot\varphi)(y,\gamma) = (\gamma^{-1}\alpha(g^{-1},y)^{-1})\cdot \varphi(g^{-1}\cdot y).
\]
This means that $F_{g\cdot\varphi}(y) = \alpha(g^{-1},y)^{-1}\cdot \varphi(g^{-1}\cdot y)$. Equivalently, we find by \eqref{Dixeq3-2} $F_{g\cdot\varphi}= g\cdot F_{\varphi}$, so that the correspondence $\varphi \to F_\varphi$ is a $G$-module morphism. Since it is clearly invertible (using $\varphi(y,\gamma) = \gamma^{-1} F_\varphi(y)$) this proves the claim \eqref{Dixeq4-12}.

At this stage, we now know that $H^1_b(G,L_\infty(Y;B))\ne \{0\}$. In other words, there is a non-trivial bounded cocycle $f\colon \ G^2\longrightarrow L_\infty(Y;B)$ where $L_\infty(Y;B)$ is equipped with the $G$-module structure \eqref{Dixeq3-2}.

Consider the embedding $L_\infty(Y;B) \subset B(L_2(Y\times \Gamma))$, taking $\varphi\in L_\infty(Y,B)$ to the (operator valued) multiplication ${\cl M}_\varphi$. Recall that by \eqref{Dixeq3-3} we have
\begin{equation}\label{Dixeq4-14}
 \forall g\in G\qquad\qquad \pi(g) {\cl M}_\varphi\pi(g)^{-1} = {\cl M}_{g\cdot \varphi}.
\end{equation}
In other words, we may identify $L_\infty(Y,B)$ with the submodule of $B(L_2(Y\times\Gamma))$ consisting of the operators ${\cl M}_\varphi$ equipped with the $G$-action defined by restricting the action defined on $B(L_2(Y\times\Gamma))$ by
\begin{equation}\label{Dixeq4-15}
\forall g\in G\quad \forall T\in B(L_2(Y\times\Gamma))\qquad\qquad g\cdot T = \pi(g) T\pi(g)^{-1}.
\end{equation}
We are now close to the situation described in (ii) in Proposition~\ref{Dixpro1-1}. However, there is one last problem:\ we do obtain a bounded [1]-cocycle $f\colon \ G\longrightarrow B(L_2(Y\times \Gamma))$ with $B(L_2(Y\times\Gamma))$ equipped with \eqref{Dixeq4-15}, but we only know that $f$ is non-trivial into $L_\infty(Y,B)$. This is where the extension to the larger group $A\wr G$ is useful.

First we extend $\pi$ to a unitary representation $\widehat\pi\colon \ A\wr G\longrightarrow B(L_2(Y\times\Gamma))$ in such a way that $\widehat\pi(A^{(G)}) =L_\infty(Y)\otimes I$. To do this we note that the von~Neumann algebra of $A$ can be identified (as a von~Neumann) algebra with $L_\infty(Y)\otimes I$. This yields a unitary representation $\sigma\colon \ A\to L_\infty(Y)\otimes I$ with range generating $L_\infty(Y) \otimes I$ (as a von~Neumann algebra) i.e. such that $\sigma(A)'' = L_\infty(Y)\otimes I$. We now recall that by \eqref{Dixeq3-3} we have
\[
 \pi(g)(L_\infty(Y)\otimes 1) \pi(g)^{-1} \subset L_\infty(Y)\otimes 1.
\]
Therefore the family of representations $\pi_g\colon\ A\to L_\infty(Y)\otimes I \subset B(L_2(Y\times \Gamma))$
($g\in G)$, defined by $\pi_g(a)= \pi(g)\sigma(a) \pi(g)^{-1}$ for any $a\in A$, have mutually commuting ranges.
By Remark \ref{rem3-6}, the product of this family of representations defines a unitary
representation $\widehat\sigma$ on the product group $A^{(G)}$.
Thus we have extended $\sigma$ to a unitary representation
\[
\widehat\sigma\colon \ A^{(G)} \longrightarrow L_\infty(Y)\otimes 1
\]
such  that
\begin{equation}\label{Dixeq4-16}
 \forall a\in A^{(G)}\qquad\qquad \widehat\sigma(g\cdot a) = \pi(g) \widehat\sigma(a) \pi(g)^{-1},
\end{equation}
where we recall that $g\cdot a$ is the ``shift action'' on $A^{(G)}$ defined by $(g\cdot a)(t) = a(g^{-1}t)$.

From \eqref{Dixeq4-16} it follows that the mapping $A^{(G)}\times G\longrightarrow B(L_2(Y\times \Gamma))$ taking $(a,g)$ to $\widehat\sigma(a)\pi(g)$ defines a unitary representation $\widehat\pi$ on $A\wr G$ extending $\pi$. Of course, this allows us to extend the action of $G$ on $B(L_2(Y\times \Gamma))$ to an action of $A\wr G$ by setting
\begin{equation}
 \theta\cdot T = \widehat\pi(\theta) T\widehat\pi(\theta)^{-1}.\tag*{$\forall \theta\in A\wr G\quad \forall T\in B(L_2(Y\times \Gamma))$}
\end{equation}
Note that since $\widehat\pi(A^{(G)})\subset L_\infty(Y)\otimes I$ we have
\begin{equation}\label{Dixeq4-17}
 \forall\theta\in A^{(G)}\quad \forall T\in L_\infty(Y,B)\qquad\qquad \theta\cdot T = T.
\end{equation}
We extend our (non-trivial) [1]-cocycle $f\colon \ G\longrightarrow L_\infty(Y,B)$ to a [1]-cocycle $\hat f\colon\ A\wr G\longrightarrow B(L_2(Y\times \Gamma))$ by setting: \ $\forall a\in A^{(G)}$ $\forall g\in G$
\[
 \hat f(ag) = f(g).
\]
Note that, since $f(g)\in L_\infty(Y,B)$  $(g\in G)$, \eqref{Dixeq4-17} implies:
\begin{equation}
 a\cdot f(g) = f(g)\qquad \forall g\in G.\tag*{$\forall a\in A^{(G)}\subset A\wr G$}
\end{equation}
Using this it is easy to check that $\hat f$ is a [1]-cocycle ($\hat f(aga'g') = f((aga'g^{-1})(gg')) = f(gg') = g\cdot f(g') + f(g) = (ag)\cdot f(g') +f(g)$ where \eqref{Dixeq4-17} is used again at the last step).

Our last claim is that the bounded [1]-cocycle $\hat f\colon \ A\wr G\longrightarrow B(L_2(Y\times \Gamma))$ is non-trivial (now viewed as a [1]-cocycle into $B(L_2(Y\times\Gamma))$). Indeed, assume that there is $T$ in $B(L_2(Y\times\Gamma))$ such that 
\begin{equation}
 \hat f(\theta) = \theta\cdot T-T.\tag*{$\forall\theta\in A\wr G$}
\end{equation}
We have then $f(g) = g\cdot T-T$ for any $g\in G$,  but also since $f$ is a [1]-cocycle we know that $f(1) =0$. This gives us
\begin{equation}
 \hat f(a)=0\tag*{$\forall a\in A^{(G)}$}
\end{equation}
and hence $a\cdot T = T$\quad $\forall a\in A^{(G)}$. Equivalently $\widehat\pi(a)T \widehat\pi(a)^{-1}=T$. But since $\widehat\pi(A^{(G)})$ fills $L_\infty(Y)\otimes 1$ we conclude that $T\in (L_\infty(Y)\otimes I)'$ and hence that $T\in L_\infty(Y;B)$. But this would mean that $f$ is trivial as a [1]-cocycle into $L_\infty(Y;B)$. This contradiction completes the proof of our claim that $\hat f$ is non-trivial. By the second part of Proposition~\ref{Dixpro1-1}, we conclude that $A\wr G$ is not unitarizable.
\end{proof}

\begin{rk}
Suppose $D\colon \ \Gamma\longrightarrow B(\ell_2\Gamma)$ is the original non-inner bounded $\lambda$-derivation on $\Gamma$. It yields a non-trivial [1]-cocycle $f\colon \ \Gamma\to B$ defined by $f(\gamma) = D(\gamma) \lambda(\gamma)^{-1}$. We associate to it a non-trivial $F\colon \ \Gamma^2\to B$ by setting $F(\gamma_0,\gamma_1) = \gamma_0f(\gamma^{-1}_0\gamma_1)$. We then view $F$ as a cocycle $F\colon \ \Gamma^2\longrightarrow L_\infty(X;B)$, viewing $F(\gamma_0,\gamma_1)$ as a constant function on $X$. We then define $\check{F}\colon \ \Gamma^2 \longrightarrow L_\infty({\cl R},B)^G$ by setting $\check{F}(\gamma_0,\gamma_1)(t,x) = F(\gamma_0,\gamma_1)(x) = F(\gamma_0,\gamma_1)$. We then associate to $\check{F}$ the cocycle on $G^2$ defined by
\[
(\chi^*\check{F})(g_0,g_1)(\omega) = \check{F}(\chi_\omega g^{-1}_0,\chi_\omega g^{-1}_1)(\omega).
\]
If $\omega = \gamma^{-1}y$ $(\gamma\in \Gamma, y\in Y)$ we have $g^{-1}_j\omega = (\gamma^{-1}\alpha(g^{-1}_j,y)^{-1})(g^{-1}_j\cdot y)$ with $g^{-1}_jy\in Y$ and hence $\chi_\omega g^{-1}_j = \gamma^{-1} \alpha(g^{-1}_j,y)^{-1}$. Thus we find
\begin{align*}
 (\chi^*\check{F})(g_0,g_1)(y,\gamma) &= \gamma^{-1}\check{F}(\alpha(g^{-1}_0,y)^{-1}, \alpha(g^{-1}_1,y)^{-1})(\omega)\\
&= \gamma^{-1}F(\alpha(g^{-1}_0,y)^{-1}, \alpha(g^{-1}_1,y)^{-1}).
\end{align*}
Let
\[
 E(g_0,g_1)(y) \overset{\text{def}}{=} F(\alpha(g^{-1}_0,y)^{-1}, \alpha(g^{-1}_1,y)^{-1})
\]
$E$ is a non-trivial cocycle into $L_\infty(Y,B)\subset B(L_2(Y\times\Gamma))$. Thus $g\longrightarrow E(1,g)$ is a non-trivial [1]-cocycle. We extend it on $A\wr G$ by setting
\[
 \widehat E(ag) = E(g).
\]
Then we finally obtain a non-inner derivation $\widetilde D\colon \ A\wr G\longrightarrow B(L_2({\cl R}))$ by setting
\begin{align*}
 \widetilde D(ag) &= \widetilde E(ag) \widehat\pi(ag)\\
&= E(g) \widehat\pi(ag)\\
&= F(1, \alpha(g^{-1},y)^{-1})\widehat \pi(ag)\\
&= \widehat\pi(a) F(1,\alpha(g^{-1},y)^{-1}) \widehat\pi(g).
\end{align*}
Using $F(1,\gamma) = f(\gamma) = D(\gamma) \lambda(\gamma)^{-1}$ we obtain finally
\[
 \widetilde D(g) = D(\alpha(g^{-1},y)^{-1}) \lambda(\alpha(g^{-1},y)) \pi(g).
\]
Consider $\xi \in L_2({\cl R})=L_2(Y\times \Gamma)$ of the form $\xi(y,\gamma) = \xi_1(y) \xi_2(\gamma)$. We have then
\begin{align*}
 (\pi(g)\xi) (y,\gamma) &= \xi(g^{-1}(y,\gamma)) = \xi(\gamma^{-1}\alpha(g^{-1},y)^{-1} (g^{-1}\cdot y))\\
&= \xi_1(g^{-1}\cdot y) (\lambda(\alpha(g^{-1},y))^{-1} \xi_2)(\gamma).
\end{align*}
Thus we have
\[
 \widetilde D(g)\xi = \xi_1(g^{-1}\cdot y)(D(\alpha(g^{-1},y)^{-1})\xi_2)(\gamma).
\]

\end{rk}

\begin{cor}
Let $p$ be such that the Burnside group $B(2,p)$ is not amenable. Then for any integer $n\ge 2$, $B(2,np)$ is not unitarizable. 
\end{cor}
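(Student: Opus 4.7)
The plan is to combine Theorem~\ref{Dixthm4-10} with known subgroup-structure results for Burnside groups. First, since $B(2,p)$ is non-amenable by hypothesis, Theorem~\ref{Dixthm2-4} (Gaboriau--Lyons) gives the orbital embedding $\bb F_2 \overset{\circ}{\subset} B(2,p)$. Choose any infinite abelian group $A$ of exponent $n$, for instance $A = \bigoplus_{k \in \bb N} \bb Z/n\bb Z$. Applying Theorem~\ref{Dixthm4-10} with $\Gamma = \bb F_2$ and $G = B(2,p)$ then yields that $H := A \wr B(2,p)$ is not unitarizable.

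Next, a direct calculation shows $H$ has exponent dividing $np$: for any $(a,g) \in H$, $(a,g)^{np} = (\sum_{i=0}^{np-1} g^i \cdot a,\, g^{np})$, where $g^{np} = 1$ since $g \in B(2,p)$ has order dividing $p$, while the first component equals $(np/d)\sum_{i=0}^{d-1} g^i \cdot a$ (with $d$ the order of $g$), which is a multiple of $n$ in $A^{(G)}$ and hence vanishes because $A$ has exponent $n$. Thus $H$ is a countable non-unitarizable group of exponent dividing $np$, so it is a quotient of the free Burnside group $B(\aleph_0, np)$ on countably many generators. By Remark~\ref{rem} (unitarizability passes to quotients), $B(\aleph_0, np)$ is not unitarizable.

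The final step bridges to $B(2, np)$. By the deep subgroup-structure results of Adyan (for large odd exponents, extended by Lysionok and Ivanov to large even ones), the $2$-generated Burnside group $B(2, N)$ contains $B(\aleph_0, N)$ as a subgroup whenever $N$ is sufficiently large. Since by hypothesis $p$ lies in the non-amenability range for $B(2,p)$ (so $p$, and hence $np$, is large), this applies and gives $B(\aleph_0, np) \hookrightarrow B(2, np)$. Remark~\ref{rem} (unitarizability also passes to subgroups in the discrete case) now gives that $B(2, np)$ is not unitarizable. The main obstacle is this last embedding step: one cannot hope to realize $H$ itself as a quotient of $B(2,np)$ (its abelianization already forces $d(H) \ge d(A) + 2 \ge 4$ for any infinite abelian $A$ of finite exponent), so the passage from the not-finitely-generated $H$ to the $2$-generated target $B(2,np)$ must go through deep Burnside-group theory; the rest of the argument is an immediate application of Theorem~\ref{Dixthm4-10} and the elementary closure properties of unitarizability from Remark~\ref{rem}.
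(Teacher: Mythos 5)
Your proposal is correct and follows essentially the same route as the paper: apply Theorem~\ref{Dixthm4-10} (via Gaboriau--Lyons) to $A\wr B(2,p)$ with $A$ infinite abelian of exponent $n$, check the exponent of the wreath product divides $np$, pass to the quotient presentation by $B(\infty,np)$, and then embed $B(\infty,np)$ into $B(2,np)$ (the paper cites \v{S}irvanjan, 1976, for this last embedding rather than Adyan--Lysionok--Ivanov, but the substance is identical).
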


\begin{proof}
Let $A$ be any infinite Abelian group that is $n$-periodic, i.e.\ such that $a^n=1$ for any $a$ in $A$. We can take for example $A = ({\bb Z}/n{\bb Z})^{({\bb N})}$. Let $G=B(2,p)$ and ${\cl G} =A \wr G$. We claim that $g^{np}=1$ for any $g$ in ${\cl G}$. To check this consider $g=a\gamma$ $(a\in A^{(G)},\gamma\in G)$. Then $g^2=a\gamma a\gamma = (aa_1)\gamma^2$ with $a_1=\gamma a\gamma^{-1}\in A^{(G)}$, $g^3 = (aa_1a_2)\gamma^3$ with $a_2=\gamma^2a\gamma^{-2}\in A^{(G)}$ and so on, so that
\[
 g^p = (aa_1\ldots a_p) \gamma^p=aa_1\ldots a_p\in A^{(G)}.
\]
Then since $A$ is $n$-periodic, $A^{(G)}$ is also $n$-periodic and hence $g^{np}=1$, proving our claim.\\
Now since it is countable and $(np)$-periodic, ${\cl G}$ is a quotient of $B(\infty,np)$ and, by Theorem \ref{Dixthm4-10},  it is not unitarizable. Therefore $B(\infty,np)$ itself  is not unitarizable (see Remark~\ref{rem}) and since $B(\infty,np)$ embeds as a subgroup in $B(2,np)$
(due to \v{S}irvanjan, 1976)   we conclude that $B(2,np)$ is not unitarizable.
\end{proof}

\begin{cor}
The following properties of a discrete group $G$ are equivalent:
\begin{itemize}
 \item[\rm (i)] $G$ is amenable.
\item[\rm (ii)] $A\wr G$ is unitarizable for some infinite Abelian group $A$.
\item[\rm (iii)] $A\wr G$ is unitarizable for all amenable groups $A$.
\end{itemize}
\end{cor}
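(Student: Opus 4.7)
The plan is to verify the three implications (i)$\Rightarrow$(iii)$\Rightarrow$(ii)$\Rightarrow$(i), noting that almost everything needed has already been established in the earlier sections; the corollary is essentially a repackaging.

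For (i)$\Rightarrow$(iii), suppose $G$ is amenable and $A$ is amenable. I would first observe that $A^{(G)}$, being the inductive limit of the finite direct products $A^F$ (for finite $F\subset G$) of copies of the amenable group $A$, is amenable. Since $A\wr G=A^{(G)}\rtimes G$ is an extension of the amenable group $G$ by the amenable normal subgroup $A^{(G)}$, and amenability is preserved under group extensions, $A\wr G$ is amenable. Theorem~\ref{thm0.1} then gives that $A\wr G$ is unitarizable.

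The implication (iii)$\Rightarrow$(ii) is immediate: take $A=\bb Z$ (or any other infinite Abelian amenable group), which is amenable, so (iii) applied to $A$ yields (ii).

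The main content is (ii)$\Rightarrow$(i), which I would prove by contrapositive. Assume $G$ is not amenable. Then in particular $G$ is infinite (finite groups being amenable), and since we work in the countable discrete setting, Theorem~\ref{Dixthm2-4} of Gaboriau--Lyons applies and yields $\bb F_2\overset{\circ}{\subset} G$. Now Theorem~\ref{Dixthm4-10}, the main theorem just proved, says that for any infinite Abelian group $A$ the wreath product $A\wr G$ is \emph{not} unitarizable. This rules out (ii), completing the contrapositive.

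There is no serious obstacle here: the depth lies entirely in the results being invoked, namely the Day--Dixmier theorem for the easy direction, and, for the hard direction, the combination of the Gaboriau--Lyons orbital embedding of $\bb F_2$ into every non-amenable countable group with the Monod--Ozawa wreath product construction of Theorem~\ref{Dixthm4-10}. The only minor point to be careful about is ensuring that the standing countability/discreteness assumption on $G$ is in force, which is consistent with the conventions set in Section~\ref{sec0}.
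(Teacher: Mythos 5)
Your proposal is correct and follows essentially the same route as the paper: (ii)$\Rightarrow$(i) by combining the Gaboriau--Lyons theorem with Theorem~\ref{Dixthm4-10}, (i)$\Rightarrow$(iii) via amenability of $A^{(G)}$ and of extensions together with the Day--Dixmier theorem, and (iii)$\Rightarrow$(ii) trivially. The only difference is your explicit (and reasonable) flagging of the countability hypothesis needed for Gaboriau--Lyons, which the paper leaves implicit.
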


\begin{proof}
The main point is (ii) $\Rightarrow$ (i) that follows from Theorem~\ref{Dixthm2-4} and Theorem~\ref{Dixthm4-10}. Then (iii) $\Rightarrow$ (ii) is trivial and (i) $\Rightarrow$ (iii) is easy:\ Indeed, if $G$ is amenable then $N\rtimes G$ is amenable for any amenable $N$, and $A$ amenable implies $A^{(G)}$ amenable. Thus $A\wr G$ is amenable (and hence unitarizable by Theorem~\ref{thm0.1}) if $A$ and $G$ are both amenable.
\end{proof}

\end{document}